\documentclass[12pt,a4paper]{article}
%%%%%%%%%%%%%%%%%%%%%%%%%%%%%%%%%%%%%%%%%%%%%%%%%%%%%%%%%%%%%%%%%%%%%%%%%%%%%%%%%%%%%%%%%%%%%%%%%%%%%%%%%%%%%%%%%%%%%%%%%%%%%%%%%%%%%%%%%%%%%%%%%%%%%%%%%%%%%%%%%%%%%%%%%%%%%%%%%%%%%%%%%%%%%%%%%%%%%%%%%%%%%%%%%%%%%%%%%%%%%%%%%%%%%%%%%%%%%%%%%%%%%%%%%%%%
\usepackage{amssymb}
\usepackage{amsmath}
\usepackage{amsfonts}

\setcounter{MaxMatrixCols}{10}
%TCIDATA{OutputFilter=LATEX.DLL}
%TCIDATA{Version=5.50.0.2953}
%TCIDATA{<META NAME="SaveForMode" CONTENT="1">}
%TCIDATA{BibliographyScheme=Manual}
%TCIDATA{Created=Wednesday, June 18, 2008 18:47:44}
%TCIDATA{LastRevised=Monday, July 18, 2022 08:01:10}
%TCIDATA{<META NAME="GraphicsSave" CONTENT="32">}
%TCIDATA{<META NAME="DocumentShell" CONTENT="Standard LaTeX\Blank - Standard LaTeX Article">}
%TCIDATA{Language=American English}
%TCIDATA{CSTFile=40 LaTeX article.cst}
%TCIDATA{ComputeDefs=
%$E:=\cup $
%}

\newtheorem{theorem}{Theorem}
\newtheorem{proposition}{Proposition}

\newtheorem{corollary}{Corollary}
\newtheorem{definition}{Definition}
\newtheorem{example}{Example}

\newtheorem{remark}{Remark}

\newenvironment{proof}[1][Proof]{\noindent\textbf{#1.} }{\ \rule{0.0em}{0.0em}}
\input{tcilatex}
\begin{document}

\title{\textbf{Integration of Bicomplex Valued Function along Hyperbolic
Curve}}
\author{Chinmay Ghosh$^{1}$, Soumen Mondal$^{2}$ \\
%EndAName
$^{1}$Department of Mathematics\\
Kazi Nazrul University\\
Nazrul Road, P.O.- Kalla C.H.\\
Asansol-713340, West Bengal, India \\
chinmayarp@gmail.com \\
$^{2}$28, Dolua Dakshinpara Haridas Primary School\\
Beldanga, Murshidabad\\
Pin-742133\\
West Bengal, India\\
mondalsoumen79@gmail.com}
\date{}
\maketitle

\begin{abstract}
In this paper, we have defined bicomplex valued functions of bounded
variations and \ rectifiable hyperbolic path. We have studied the
integration of product-type bicomplex functions over rectifiable hyperbolic
path. Also we have established bicomplex analogue of the Fundamental Theorem
of Calculus for line integral.

\textbf{AMS Subject Classification }(2010) : 30E20, 30G35, 32A30.

\textbf{Keywords and Phrases}: Bounded variation, Partitions, Rectifiable
hyperbolic path, Hyperbolic interval, Product-type bicomplex functions.
\end{abstract}

\section{Introduction}

In $1883,$ Hamilton \cite{Ham}\ discovered four dimensional quaternions to
extend the complex number system to more than two dimensions. Quaternions
have algebraic properties of real and complex numbers except the
commutativity of multiplication. In $1892,$ C. Segre \cite{Seg} introduced
another four dimensional generalization of complex numbers, called bicomplex
numbers. Unlike quaternions the set of bicomplex numbers form a commutative
ring having divisors of zero. Following Segre, many mathematicians developed
the theory of functions on bicomplex numbers. The theory of bicomplex
variables is presented systematically in the book \cite{Pr} of G. B. Price.
There is an interesting subset of the set of bicomplex numbers, called the
set of hyperbolic numbers$.$ G. B. Price has not given much focus on
hyperbolic numbers. In \cite{Sob} we get a geometrical view of the
hyperbolic numbers.

In $2016,$ A. S. Balankin et al. \cite{Bal} introduced the concept of
hyperbolic intervals and the hyperbolic length of the hyperbolic interval.
In $2019,$ J. Bory-Rayes et al. \cite{Bo} introduced the integration of
product-type functions over hyperbolic curves using the concept of limit
over a filter base. In $2022,$ M. E. Luna-Elizarrar\'{a}s \cite{Lun1}
defined the partition of a hyperbolic interval and introduced the
integration of functions of hyperbolic variable.

In this paper, we have studied the integration of product-type bicomplex
function over hyperbolic path in a different way. Our results are presented
in the line of the book \cite{con}.

\section{Basic definitions}

We denote the set of real and complex numbers by $\mathbb{R}$ and $\mathbb{C}
$ respectively. We may think three imaginary numbers $\mathbf{i}_{1},\mathbf{%
i}_{2}$ and $\mathbf{j}$ governed by the rules%
\begin{equation*}
\mathbf{i}_{1}^{2}=-1,\mathbf{i}_{2}^{2}=-1,\mathbf{j}^{2}=1
\end{equation*}%
\begin{eqnarray*}
\mathbf{i}_{1}\mathbf{i}_{2} &=&\mathbf{i}_{2}\mathbf{i}_{1}=\mathbf{j} \\
\mathbf{i}_{1}\mathbf{j} &=&\mathbf{ji}_{1}=-\mathbf{i}_{2} \\
\mathbf{i}_{2}\mathbf{j} &=&\mathbf{ji}_{2}=-\mathbf{i}_{1}.
\end{eqnarray*}%
Then we have two complex planes $\mathbb{C}\left( \mathbf{i}_{1}\right)
=\left\{ x+\mathbf{i}_{1}y:x,y\in \mathbb{R}\right\} $ and $\mathbb{C}\left( 
\mathbf{i}_{2}\right) =\left\{ x+\mathbf{i}_{2}y:x,y\in \mathbb{R}\right\} ,$
both of which are identical to $\mathbb{C}.$ Bicomplex numbers(\cite{Alp},%
\cite{Sai}) are defined as $\zeta =z_{1}+\mathbf{i}_{2}z_{2}$ for $%
z_{1},z_{2}\in \mathbb{C}\left( \mathbf{i}_{1}\right) $. The set of all
bicomplex numbers is denoted by $\mathbb{BC}$. In particular if $%
z_{1}=x,z_{2}=\mathbf{i}_{1}y$ where $x,y\in \mathbb{R}$ we get $\zeta =x+%
\mathbf{j}y$ and these type of numbers are called hyperbolic numbers or
duplex numbers. The set of all hyperbolic numbers is denoted by $\mathbb{D}$%
. For $\left( z_{1}+\mathbf{i}_{2}z_{2}\right) ,\left( w_{1}+\mathbf{i}%
_{2}w_{2}\right) \in \mathbb{BC},$ the addition and multiplication are
definde as%
\begin{eqnarray*}
\left( z_{1}+\mathbf{i}_{2}z_{2}\right) +\left( w_{1}+\mathbf{i}%
_{2}w_{2}\right)  &=&\left( z_{1}+w_{1}\right) +\mathbf{i}_{2}\left(
z_{2}+w_{2}\right)  \\
\left( z_{1}+\mathbf{i}_{2}z_{2}\right) \left( w_{1}+\mathbf{i}%
_{2}w_{2}\right)  &=&\left( z_{1}w_{1}-z_{2}w_{2}\right) +\mathbf{i}%
_{2}\left( z_{1}w_{2}+z_{2}w_{1}\right) .
\end{eqnarray*}%
With these operations $\mathbb{BC}$ forms a commutative ring with zero
divisors. The elements $z_{1}+\mathbf{i}_{2}z_{2}\in \mathbb{BC}$ such that $%
z_{1}^{2}+z_{2}^{2}=0$ are the zero divisors. We denote the set of nonzero
zero divisors in $\mathbb{BC}$ by $\mathfrak{O}$ whereas $\mathfrak{O}_{0}=%
\mathfrak{O}\cup \left\{ 0\right\} $. On the other hand let us denote the
set of nonzero zero divisors in $\mathbb{D}$ by $\mathbb{O}$ whereas $%
\mathbb{O}_{0}=\mathbb{O}\cup \left\{ 0\right\} .$The interesting property
of a bicomplex number is its idempotent representation. Setting $\mathbf{e}%
_{1}=\frac{1+\mathbf{j}}{2}$ and $\mathbf{e}_{2}=\frac{1-\mathbf{j}}{2},$ we
get%
\begin{equation*}
z_{1}+\mathbf{i}_{2}z_{2}=\left( z_{1}-\mathbf{i}_{1}z_{2}\right) \mathbf{e}%
_{1}+\left( z_{1}+\mathbf{i}_{1}z_{2}\right) \mathbf{e}_{2}.
\end{equation*}%
Many calculations become easier in this representation.

The set of nonnegative hyperbolic numbers is%
\begin{equation*}
\mathbb{D}^{+}=\left\{ \nu _{1}\mathbf{e}_{1}+\nu _{2}\mathbf{e}_{2}:\nu
_{1},\nu _{2}\geq 0\right\} .
\end{equation*}%
A hyperbolic number $\zeta $ is said to be $\left( \text{strictly}\right) $
positive if $\zeta \in \mathbb{D}^{+}\backslash \left\{ 0\right\} .$The set
of nonnegative hyperbolic numbers is also defined as 
\begin{equation*}
\mathbb{D}^{+}=\left\{ x+y\mathbf{k}:x^{2}-y^{2}\geq 0,x\geq 0\right\} .
\end{equation*}

On the realization of $\mathbb{D}^{+},$\ M.E. Luna-Elizarraras et.al.\cite%
{Lun} defined a partial order relation on $\mathbb{D}$. For two hyperbolic
numbers $\zeta _{1},\zeta _{2}$ the relation $\preceq _{_{\mathbb{D}}}$ is
defined as 
\begin{equation*}
\zeta _{1}\preceq _{_{\mathbb{D}}}\zeta _{2}\text{ if and only if }\zeta
_{2}-\zeta _{1}\in \mathbb{D}^{+}.
\end{equation*}%
One can check that this relation is reflexive, transitive and antisymmetric.
Therefore $\preceq _{_{\mathbb{D}}}$ is a partial order relation on $\mathbb{%
D}$. This partial order relation $\preceq _{_{\mathbb{D}}}$ on $\mathbb{D}$
is an extension of the total order relation $\leq $ on $\mathbb{R}.$ We say $%
\zeta _{1}\prec _{_{\mathbb{D}}}\zeta _{2}$ if $\zeta _{1}\preceq _{_{%
\mathbb{D}}}\zeta _{2}$ but $\zeta _{1}\neq \zeta _{2}.$ Also we say $\zeta
_{2}\succeq _{_{\mathbb{D}}}\zeta _{1}$ if $\zeta _{1}\preceq _{_{\mathbb{D}%
}}\zeta _{2}$ and $\zeta _{2}\succ _{_{\mathbb{D}}}\zeta _{1}$ if $\zeta
_{1}\prec _{_{\mathbb{D}}}\zeta _{2}.$

\begin{definition}
\cite{Lun} For any hyperbolic number $\zeta =\nu _{1}\mathbf{e}_{1}+\nu _{2}%
\mathbf{e}_{2},$ the $\mathbb{D}-$modulus of $\zeta $ is defined by%
\begin{equation*}
\left\vert \zeta \right\vert _{\mathbb{D}}=\left\vert \nu _{1}\mathbf{e}%
_{1}+\nu _{2}\mathbf{e}_{2}\right\vert _{\mathbb{D}}=\left\vert \nu
_{1}\right\vert \mathbf{e}_{1}+\left\vert \nu _{2}\right\vert \mathbf{e}%
_{2}\in \mathbb{D}^{+}
\end{equation*}%
where $\left\vert \nu _{1}\right\vert $ and $\left\vert \nu _{2}\right\vert $
are the usual modulus of real numbers.
\end{definition}

\begin{definition}
\cite{Lun} A subset $A$\ of $\mathbb{D}$ is said to be $\mathbb{D}-$ bounded
if there exists $M\in \mathbb{D}^{+}$ such that $\left\vert \zeta
\right\vert _{\mathbb{D}}\preceq _{_{\mathbb{D}}}M$ for any $\zeta \in A.$
\end{definition}

Set%
\begin{eqnarray*}
A_{1} &=&\left\{ x\in \mathbb{R}:\exists \text{ }y\in \mathbb{R},\text{ }x%
\mathbf{e}_{1}+y\mathbf{e}_{2}\in A\right\} , \\
A_{2} &=&\left\{ y\in \mathbb{R}:\exists \text{ }x\in \mathbb{R},\text{ }x%
\mathbf{e}_{1}+y\mathbf{e}_{2}\in A\right\} .
\end{eqnarray*}%
If $A$\ is $\mathbb{D}-$bounded then $A_{1}$ and $A_{2}$ are bounded subset
of $\mathbb{R}$.

\begin{definition}
\label{D3}\cite{Lun} For a $\mathbb{D}-$ bounded subset $A$\ of $\mathbb{D},$
the supremum of $A$\ with respect to the $\mathbb{D}-$ modulus is defined by 
\begin{equation*}
\sup\nolimits_{\mathbb{D}}A=\sup A_{1}\mathbf{e}_{1}+\sup A_{2}\mathbf{e}%
_{2}.
\end{equation*}
\end{definition}

\begin{definition}
\cite{Lun} A sequence of hyperbolic numbers $\left\{ \zeta _{n}\right\}
_{n\geq 1}$ is said to be convergent to\textbf{\ }$\zeta \in \mathbb{D}$ if
for $\varepsilon \in \mathbb{D}^{+}\backslash \left\{ 0\right\} $ there
exists $k\in \mathbb{N}$ such that%
\begin{equation*}
\left\vert \zeta _{n}-\zeta \right\vert _{\mathbb{D}}\prec _{_{\mathbb{D}%
}}\varepsilon .
\end{equation*}%
Then we write 
\begin{equation*}
\lim\limits_{n\rightarrow \infty }\zeta _{n}=\zeta .
\end{equation*}
\end{definition}

\begin{definition}
\cite{Lun} A sequence of hyperbolic numbers $\left\{ \zeta _{n}\right\}
_{n\geq 1}$ is said to be $\mathbb{D}-$ Cauchy sequence\textbf{\ }$\zeta \in 
\mathbb{D}$ if for $\varepsilon \in \mathbb{D}^{+}\backslash \left\{
0\right\} $ $\exists $ $N\in \mathbb{N}$ such that%
\begin{equation*}
\left\vert \zeta _{N+m}-\zeta _{N}\right\vert _{\mathbb{D}}\prec _{_{\mathbb{%
D}}}\varepsilon 
\end{equation*}%
for all $m=1,2,3,...$ .
\end{definition}

Note that a sequence of hyperbolic numbers $\left\{ \zeta _{n}\right\}
_{n\geq 1}$ is convergent\textbf{\ }if and only if it is a $\mathbb{D}-$
Cauchy sequence\textbf{.}

\begin{definition}
\cite{Lun} A hyperbolic series $\sum\limits_{n=1}^{\infty }\zeta _{n}$ is
convergent if and only if its partial sum is a $\mathbb{D}-$ Cauchy
sequence, i.e.\textbf{,} for any $\varepsilon \in \mathbb{D}^{+}\backslash
\left\{ 0\right\} $ $\exists $ $N\in \mathbb{N}$ such that 
\begin{equation*}
\left\vert \sum\limits_{k=1}^{m}\zeta _{N+k}\right\vert _{\mathbb{D}}\prec
_{_{\mathbb{D}}}\varepsilon 
\end{equation*}%
for any $m\in \mathbb{N}.$
\end{definition}

\begin{definition}
\cite{Lun} A hyperbolic series $\sum\limits_{n=1}^{\infty }\zeta _{n}$ is $%
\mathbb{D}-$absolutely convergent if the series $\sum\limits_{n=1}^{\infty
}\left\vert \zeta _{n}\right\vert _{\mathbb{D}}$ is convergent.
\end{definition}

Every $\mathbb{D}-$ absolutely convergent series is convergent.

\begin{definition}
\cite{Bal} Let $\alpha =a_{1}\mathbf{e}_{1}+a_{2}\mathbf{e}_{2},\beta =b_{1}%
\mathbf{e}_{1}+b_{2}\mathbf{e}_{2}\in \mathbb{D}$ with $\alpha \preceq _{_{%
\mathbb{D}}}\beta .$ The closed hyperbolic interval ($\mathbb{D}-$interval)
is defined by 
\begin{equation*}
\left[ \alpha ,\beta \right] _{\mathbb{D}}=\left\{ \zeta \in \mathbb{D}%
:\alpha \preceq _{_{\mathbb{D}}}\zeta \preceq _{_{\mathbb{D}}}\beta \right\}
.
\end{equation*}%
Similarly the open hyperbolic interval ($\mathbb{D}-$interval) is defined by%
\begin{equation*}
\left( \alpha ,\beta \right) _{\mathbb{D}}=\left\{ \zeta \in \mathbb{D}%
:\alpha \prec _{_{\mathbb{D}}}\zeta \prec _{_{\mathbb{D}}}\beta \right\} .
\end{equation*}%
The length of the $\mathbb{D}-$interval $\left[ \alpha ,\beta \right] _{%
\mathbb{D}}$ or $\left( \alpha ,\beta \right) _{\mathbb{D}}$\ is defined by 
\begin{equation*}
l_{\mathbb{D}}\left( \left[ \alpha ,\beta \right] _{\mathbb{D}}\right)
=\beta -\alpha .
\end{equation*}%
$\left[ \alpha ,\beta \right] _{\mathbb{D}}$ is called a degenerate closed $%
\mathbb{D}-$interval if $\beta -\alpha $ is a nonnegative zero divisor
hyperbolic number and $\left[ \alpha ,\beta \right] _{\mathbb{D}}$ is called
a nondegenerate closed $\mathbb{D}-$interval if $\beta -\alpha $ $\in 
\mathbb{D}^{+}\backslash \mathbb{O}_{0}$.
\end{definition}

\begin{definition}
\cite{Bo} A set $A\mathbb{(\subset D)}$ is called product-type set if $A%
\mathbb{=}A_{1}\mathbf{e}_{1}+A_{2}e_{2}$ for some real sets $A_{1},A_{2}.$
\end{definition}

\begin{definition}
\cite{Lun1} Let $\left[ \alpha ,\beta \right] _{\mathbb{D}}$ be a
nondegenerate closed\ $\mathbb{D}-$interval. The partition $P$ of $\left[
\alpha ,\beta \right] _{\mathbb{D}}$ is the set $\left\{ \zeta _{0},\zeta
_{1},\zeta _{2},...,\zeta _{n}\right\} \subset \left[ \alpha ,\beta \right]
_{\mathbb{D}}$ such that%
\begin{equation*}
\alpha =\zeta _{0}\prec _{_{\mathbb{D}}}\zeta _{1}\prec _{_{\mathbb{D}%
}}\zeta _{2}\prec _{_{\mathbb{D}}}...\prec _{_{\mathbb{D}}}\zeta _{n}=\beta
\end{equation*}%
and 
\begin{equation*}
\zeta _{k}-\zeta _{k-1}\in \mathbb{D}^{+}\backslash \mathbb{O}%
_{0},k=1,2,...,n.
\end{equation*}
\end{definition}

A definition of infinity in the hyperbolic case is given in $\cite{Gh}$ as $%
\infty _{\mathbb{D}}=\infty \mathbf{e}_{1}+\infty _{\mathbb{D}}\mathbf{e}%
_{2}.$

\begin{definition}
\cite{Lun} A hyperbolic path or $\mathbb{D}-$path is a $\mathbb{D}-$%
continuous function $\Gamma :\left[ \alpha ,\beta \right] _{\mathbb{D}%
}\rightarrow \mathbb{BC}$, for some $\mathbb{D}-$interval $\left[ \alpha
,\beta \right] _{\mathbb{D}}$.
\end{definition}

In that case we get for $\tau =t\mathbf{e}_{1}+s\mathbf{e}_{2}\in \left[
\alpha ,\beta \right] _{\mathbb{D}},$ 
\begin{equation*}
\Gamma \left( \tau \right) =\gamma _{1}\left( t\right) \mathbf{e}_{1}+\gamma
_{2}\left( s\right) \mathbf{e}_{2}
\end{equation*}%
where $\gamma _{1}:\left[ a_{1},b_{1}\right] \rightarrow \mathbb{C},$ $%
\gamma _{2}:\left[ a_{2},b_{2}\right] \rightarrow \mathbb{C}$ are two paths
in $\mathbb{C}$ for $\alpha =a_{1}\mathbf{e}_{1}+a_{2}\mathbf{e}_{2}\in 
\mathbb{D},\beta =b_{1}\mathbf{e}_{1}+b_{2}\mathbf{e}_{2}\in \mathbb{D}$.

\begin{definition}
\cite{Da} A function $f:A\mathbb{=}A_{1}\mathbf{e}_{1}+A_{2}\mathbf{e}%
_{2}\subset \mathbb{BC\rightarrow BC}$ is called product-type if there exist 
$f_{i}:A_{i}\rightarrow 
%TCIMACRO{\U{2102} }%
%BeginExpansion
\mathbb{C}
%EndExpansion
$ for $i=1,2$ such that $f(\alpha _{1}e_{1}+\alpha _{2}e_{2})=f_{1}(\alpha
_{1})\mathbf{e}_{1}+f_{2}(\alpha _{2})\mathbf{e}_{2}$ for all $\alpha
_{1}e_{1}+\alpha _{2}e_{2}\in A$.
\end{definition}

\begin{example}
A $\mathbb{D-}$path $\Gamma (=\gamma _{1}e_{1}+\gamma _{2}e_{2}):\left[
\alpha ,\beta \right] _{\mathbb{D}}\rightarrow \mathbb{BC}$ is a
product-type function.
\end{example}

\section{Main Results}

In this section we prove our main results.

\begin{definition}
A function $\Gamma :\left[ \alpha ,\beta \right] _{\mathbb{D}}\rightarrow 
\mathbb{BC},$ for $\left[ \alpha ,\beta \right] _{\mathbb{D}}\subset \mathbb{%
D},$ is of $\mathbb{D}-$bounded variation if there exists $M\in \mathbb{D}%
^{+}$ such that for any partition $P=\left\{ \zeta _{0},\zeta _{1},\zeta
_{2},...,\zeta _{n}\right\} $ of $\left[ \alpha ,\beta \right] _{\mathbb{D}}$%
\begin{equation*}
v\left( \Gamma ;P\right) =\dsum\limits_{k=1}^{n}\left\vert \Gamma \left(
\zeta _{k}\right) -\Gamma \left( \zeta _{k-1}\right) \right\vert _{\mathbb{D}%
}\preceq _{_{\mathbb{D}}}M.
\end{equation*}%
The total $\mathbb{D}-$variation of $\Gamma ,$ $V\left( \Gamma \right) ,$ is
defined by%
\begin{equation*}
V\left( \Gamma \right) =\sup\nolimits_{\mathbb{D}}\left\{ v\left( \Gamma
;P\right) :P\text{ is a partition of }\left[ \alpha ,\beta \right] _{\mathbb{%
D}}\right\} .
\end{equation*}%
Obviously 
\begin{equation*}
V\left( \Gamma \right) \preceq _{_{\mathbb{D}}}M\prec _{_{\mathbb{D}}}\infty
_{\mathbb{D}}.
\end{equation*}
\end{definition}

\begin{proposition}
Let $\Gamma :\left[ \alpha ,\beta \right] _{\mathbb{D}}\rightarrow \mathbb{BC%
}$ be of $\mathbb{D}-$bounded variation.Then

$\left( a\right) $ If $P$ and $Q$ are partitions of $\left[ \alpha ,\beta %
\right] _{\mathbb{D}}$ and $P\subset Q$ then 
\begin{equation*}
v\left( \Gamma ;P\right) \preceq _{_{\mathbb{D}}}v\left( \Gamma ;Q\right) .
\end{equation*}

$\left( b\right) $ If $\Lambda :\left[ \alpha ,\beta \right] _{\mathbb{D}%
}\rightarrow \mathbb{BC}$ is also of $\mathbb{D}-$bounded variation and $%
a,b\in \mathbb{BC}$ then $a\Gamma +b\Lambda $ is of $\mathbb{D}-$bounded
variation and 
\begin{equation*}
V\left( a\Gamma +b\Lambda \right) \preceq _{_{\mathbb{D}}}\left\vert
a\right\vert _{\mathbb{D}}V\left( \Gamma \right) +\left\vert b\right\vert _{%
\mathbb{D}}V\left( \Lambda \right) .
\end{equation*}
\end{proposition}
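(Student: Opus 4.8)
The plan is to reduce both assertions to the classical one-dimensional theory of functions of bounded variation by passing to the idempotent decomposition. Write $\alpha=a_{1}\mathbf{e}_{1}+a_{2}\mathbf{e}_{2}$, $\beta=b_{1}\mathbf{e}_{1}+b_{2}\mathbf{e}_{2}$, and, for $\tau=t\mathbf{e}_{1}+s\mathbf{e}_{2}\in\left[\alpha,\beta\right]_{\mathbb{D}}$, $\Gamma(\tau)=\gamma_{1}(t)\mathbf{e}_{1}+\gamma_{2}(s)\mathbf{e}_{2}$ with $\gamma_{1}:\left[a_{1},b_{1}\right]\rightarrow\mathbb{C}$ and $\gamma_{2}:\left[a_{2},b_{2}\right]\rightarrow\mathbb{C}$. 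First I would observe that a partition $P=\left\{\zeta_{0},\dots,\zeta_{n}\right\}$ of $\left[\alpha,\beta\right]_{\mathbb{D}}$, with $\zeta_{k}=t_{k}\mathbf{e}_{1}+s_{k}\mathbf{e}_{2}$, splits into ordinary partitions $P_{1}=\left\{t_{0},\dots,t_{n}\right\}$ of $\left[a_{1},b_{1}\right]$ and $P_{2}=\left\{s_{0},\dots,s_{n}\right\}$ of $\left[a_{2},b_{2}\right]$, since $\zeta_{k}-\zeta_{k-1}\in\mathbb{D}^{+}\backslash\mathbb{O}_{0}$ forces $t_{k-1}<t_{k}$ and $s_{k-1}<s_{k}$; conversely, any ordinary partition of a factor interval lifts to such a $P$ by pairing it with any partition of the other factor interval having the same number of points. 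Because the $\mathbb{D}$-modulus acts componentwise and $\mathbf{e}_{1}$, $\mathbf{e}_{2}$ form a basis with $\mathbf{e}_{1}\mathbf{e}_{2}=0$, $\mathbf{e}_{i}^{2}=\mathbf{e}_{i}$, this gives $v(\Gamma;P)=v(\gamma_{1};P_{1})\mathbf{e}_{1}+v(\gamma_{2};P_{2})\mathbf{e}_{2}$, where $v(\gamma_{i};\cdot)$ denotes the usual variation sum.

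Using the coordinatewise description of $\preceq_{_{\mathbb{D}}}$ (namely $x_{1}\mathbf{e}_{1}+x_{2}\mathbf{e}_{2}\preceq_{_{\mathbb{D}}}y_{1}\mathbf{e}_{1}+y_{2}\mathbf{e}_{2}$ iff $x_{1}\leq y_{1}$ and $x_{2}\leq y_{2}$) together with Definition \ref{D3} for $\sup\nolimits_{\mathbb{D}}$, I would then record the clean consequence that $\Gamma$ is of $\mathbb{D}$-bounded variation precisely when $\gamma_{1}$ and $\gamma_{2}$ are of ordinary bounded variation, and that in that case $V(\Gamma)=V(\gamma_{1})\mathbf{e}_{1}+V(\gamma_{2})\mathbf{e}_{2}$. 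Part $(a)$ is then immediate: $P\subset Q$ gives $P_{1}\subset Q_{1}$ and $P_{2}\subset Q_{2}$, so the classical monotonicity of variation sums under refinement yields $v(\gamma_{i};P_{i})\leq v(\gamma_{i};Q_{i})$ for $i=1,2$, and multiplying by $\mathbf{e}_{i}$ and adding gives $v(\Gamma;P)\preceq_{_{\mathbb{D}}}v(\Gamma;Q)$.

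For part $(b)$ I would write $a=a_{1}^{\prime}\mathbf{e}_{1}+a_{2}^{\prime}\mathbf{e}_{2}$, $b=b_{1}^{\prime}\mathbf{e}_{1}+b_{2}^{\prime}\mathbf{e}_{2}$, and $\Lambda(\tau)=\lambda_{1}(t)\mathbf{e}_{1}+\lambda_{2}(s)\mathbf{e}_{2}$, so that in the $i$-th idempotent component $a\Gamma+b\Lambda$ becomes the complex-valued function $a_{i}^{\prime}\gamma_{i}+b_{i}^{\prime}\lambda_{i}$. On an arbitrary partition $P_{i}$ the ordinary triangle inequality gives $v(a_{i}^{\prime}\gamma_{i}+b_{i}^{\prime}\lambda_{i};P_{i})\leq\left\vert a_{i}^{\prime}\right\vert v(\gamma_{i};P_{i})+\left\vert b_{i}^{\prime}\right\vert v(\lambda_{i};P_{i})\leq\left\vert a_{i}^{\prime}\right\vert V(\gamma_{i})+\left\vert b_{i}^{\prime}\right\vert V(\lambda_{i})$, so each component is of bounded variation with total variation at most $\left\vert a_{i}^{\prime}\right\vert V(\gamma_{i})+\left\vert b_{i}^{\prime}\right\vert V(\lambda_{i})$. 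Recombining with $\mathbf{e}_{1}$, $\mathbf{e}_{2}$, and using $\left\vert a\right\vert_{\mathbb{D}}=\left\vert a_{1}^{\prime}\right\vert\mathbf{e}_{1}+\left\vert a_{2}^{\prime}\right\vert\mathbf{e}_{2}$ together with $\mathbf{e}_{1}\mathbf{e}_{2}=0$, I obtain that $a\Gamma+b\Lambda$ is of $\mathbb{D}$-bounded variation and $V(a\Gamma+b\Lambda)\preceq_{_{\mathbb{D}}}\left\vert a\right\vert_{\mathbb{D}}V(\Gamma)+\left\vert b\right\vert_{\mathbb{D}}V(\Lambda)$, which in particular is $\prec_{_{\mathbb{D}}}\infty_{\mathbb{D}}$.

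The main obstacle I anticipate is purely the bookkeeping in the first step: verifying carefully that the family of variation sums, its $\mathbb{D}$-supremum, and the order relation all decouple along $\mathbf{e}_{1}$ and $\mathbf{e}_{2}$, and in particular that the projections of hyperbolic partitions exhaust all ordinary partitions of each factor interval, so that the projected $\mathbb{D}$-supremum genuinely equals the pair of one-dimensional total variations. Once that reduction is in hand, parts $(a)$ and $(b)$ are direct transcriptions of the standard real and complex facts (monotonicity under refinement, and the subadditivity and absolute homogeneity of total variation).
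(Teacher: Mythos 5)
Your reduction rests on writing $\Gamma(\tau)=\gamma_{1}(t)\mathbf{e}_{1}+\gamma_{2}(s)\mathbf{e}_{2}$ with $\gamma_{1}$ a function of $t$ alone and $\gamma_{2}$ of $s$ alone, i.e.\ on $\Gamma$ (and likewise $\Lambda$) being product-type. That hypothesis is not in the statement: the proposition is asserted for an arbitrary $\Gamma:\left[\alpha,\beta\right]_{\mathbb{D}}\rightarrow\mathbb{BC}$ of $\mathbb{D}$-bounded variation. For a general such $\Gamma$ the idempotent components are functions $\Gamma_{i}(t,s)$ of both real parameters (consider $\Gamma(t\mathbf{e}_{1}+s\mathbf{e}_{2})=(t+s)\mathbf{e}_{1}$), so the variation sum in the first component is $\sum_{k}\left\vert\Gamma_{1}(t_{k},s_{k})-\Gamma_{1}(t_{k-1},s_{k-1})\right\vert$, which depends on how the projections $P_{1}$ and $P_{2}$ are paired inside $P$ and is not of the form $v(\gamma_{1};P_{1})$. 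Consequently the identity $v(\Gamma;P)=v(\gamma_{1};P_{1})\mathbf{e}_{1}+v(\gamma_{2};P_{2})\mathbf{e}_{2}$, and with it the whole decoupling of the supremum, fails outside the product-type case; the obstacle you flag as ``purely bookkeeping'' is in fact where the argument breaks.

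The paper's own proof never decomposes $\Gamma$: part $(a)$ is the classical refinement argument run verbatim in $\mathbb{D}$, adjoining one point at a time and using the triangle inequality $\left\vert\Gamma(\zeta_{k})-\Gamma(\zeta_{k-1})\right\vert_{\mathbb{D}}\preceq_{_{\mathbb{D}}}\left\vert\Gamma(\zeta_{k})-\Gamma(\eta)\right\vert_{\mathbb{D}}+\left\vert\Gamma(\eta)-\Gamma(\zeta_{k-1})\right\vert_{\mathbb{D}}$, and part $(b)$ estimates $\left\vert\Omega(\zeta_{r})-\Omega(\zeta_{r-1})\right\vert_{\mathbb{D}}$ directly by $\left\vert a\right\vert_{\mathbb{D}}\left\vert\Gamma(\zeta_{r})-\Gamma(\zeta_{r-1})\right\vert_{\mathbb{D}}+\left\vert b\right\vert_{\mathbb{D}}\left\vert\Lambda(\zeta_{r})-\Lambda(\zeta_{r-1})\right\vert_{\mathbb{D}}$ before summing and passing to $\sup\nolimits_{\mathbb{D}}$. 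That route needs only the order-compatibility of $\left\vert\cdot\right\vert_{\mathbb{D}}$ with addition and multiplication and works for arbitrary $\Gamma$. If you add the product-type hypothesis, your componentwise argument does go through (it is essentially the paper's later Proposition \ref{P2} plus the classical facts), but as a proof of the proposition as stated it is incomplete; either supply the direct $\mathbb{D}$-modulus argument or justify why the general case reduces to the product-type one.
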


\begin{proof}
$(a)$ Let $P=\left\{ \zeta _{0},\zeta _{1},\zeta _{2},...,\zeta _{n}\right\} 
$ be a partition of $\left[ \alpha ,\beta \right] _{\mathbb{D}}.$

First we examine the effect of adjoining one additional point $\eta $ to $P.$

The subinterval $[\zeta _{k-1},\zeta _{k}]_{\mathbb{D}}$ is divided into two
smaller subintervals $[\zeta _{k-1},\eta ]_{\mathbb{D}}$ and $[\eta ,\zeta
_{k}]_{\mathbb{D}}$ such that 
\begin{equation*}
\alpha =\zeta _{0}\prec _{_{\mathbb{D}}}\zeta _{1}\prec _{_{\mathbb{D}%
}}...\prec _{_{\mathbb{D}}}\zeta _{k-1}\prec _{_{\mathbb{D}}}\eta \prec _{_{%
\mathbb{D}}}\zeta _{k}\prec _{_{\mathbb{D}}}...\prec _{_{\mathbb{D}}}\zeta
_{n}=\beta ,
\end{equation*}

and 
\begin{equation*}
\eta -\zeta _{k-1};\zeta _{k}-\eta \in \mathbb{D}^{+}\backslash \mathbb{O}%
_{0}.
\end{equation*}

Then the set $P_{1}=\left\{ \zeta _{0},\zeta _{1},\zeta _{2},...,\zeta
_{k-1},\eta ,\zeta _{k},...,\zeta _{n}\right\} $ is a partition of $\left[
\alpha ,\beta \right] _{\mathbb{D}}$ such that $P\subset P_{1}.$

Now,%
\begin{equation*}
v\left( \Gamma ;P\right) =\left\vert \Gamma (\zeta _{1})-\Gamma (\zeta
_{0})\right\vert _{\mathbb{D}}+...+\left\vert \Gamma (\zeta _{k})-\Gamma
(\zeta _{k-1})\right\vert _{\mathbb{D}}+...+\left\vert \Gamma (\zeta
_{n})-\Gamma (\zeta _{n-1})\right\vert _{\mathbb{D}},
\end{equation*}

and%
\begin{equation*}
v\left( \Gamma ;P_{1}\right) =\left\vert \Gamma (\zeta _{1})-\Gamma (\zeta
_{0})\right\vert _{\mathbb{D}}+...+\left\vert \Gamma (\eta )-\Gamma (\zeta
_{k-1})\right\vert _{\mathbb{D}}+\left\vert \Gamma (\zeta _{k})-\Gamma (\eta
)\right\vert _{\mathbb{D}}+...+\left\vert \Gamma (\zeta _{n})-\Gamma (\zeta
_{n-1})\right\vert _{\mathbb{D}}.
\end{equation*}

Since%
\begin{eqnarray*}
\left\vert \Gamma (\zeta _{k})-\Gamma (\zeta _{k-1})\right\vert _{\mathbb{D}%
} &=&\left\vert \Gamma (\zeta _{k})-\Gamma (\eta )+\Gamma (\eta )-\Gamma
(\zeta _{k-1})\right\vert _{\mathbb{D}} \\
&\preceq &_{_{\mathbb{D}}}\left\vert \Gamma (\zeta _{k})-\Gamma (\eta
)\right\vert _{\mathbb{D}}+\left\vert \Gamma (\eta )-\Gamma (\zeta
_{k-1})\right\vert _{\mathbb{D}},
\end{eqnarray*}

it follows that 
\begin{equation*}
v\left( \Gamma ;P\right) \preceq _{_{\mathbb{D}}}v\left( \Gamma
;P_{1}\right) .
\end{equation*}

Since $Q$ can be obtained from $P$ by adjoining a finite number of
additional points to $P,$ one at a time, by repeating the arguments a finite
number of times, we have 
\begin{equation*}
v\left( \Gamma ;P\right) \preceq _{_{\mathbb{D}}}v\left( \Gamma ;Q\right) .
\end{equation*}

$(b)$ Let $\Omega (x)=a\Gamma (x)+b\Lambda (x),$ $x\in \left[ \alpha ,\beta %
\right] _{\mathbb{D}}.$

Let $P=\left\{ \zeta _{0},\zeta _{1},\zeta _{2},...,\zeta _{n}\right\} $ be
a partition of $\left[ \alpha ,\beta \right] _{\mathbb{D}}.$ Then%
\begin{equation*}
v\left( \Gamma ;P\right) =\left\vert \Gamma (\zeta _{1})-\Gamma (\zeta
_{0})\right\vert _{\mathbb{D}}+...+\left\vert \Gamma (\zeta _{k})-\Gamma
(\zeta _{k-1})\right\vert _{\mathbb{D}}+...+\left\vert \Gamma (\zeta
_{n})-\Gamma (\zeta _{n-1})\right\vert _{\mathbb{D}},
\end{equation*}%
\begin{equation*}
v\left( \Lambda ;P\right) =\left\vert \Lambda (\zeta _{1})-\Lambda (\zeta
_{0})\right\vert _{\mathbb{D}}+...+\left\vert \Lambda (\zeta _{k})-\Lambda
(\zeta _{k-1})\right\vert _{\mathbb{D}}+...+\left\vert \Lambda (\zeta
_{n})-\Lambda (\zeta _{n-1})\right\vert _{\mathbb{D}},
\end{equation*}%
\begin{equation*}
v\left( \Omega ;P\right) =\left\vert \Omega (\zeta _{1})-\Omega (\zeta
_{0})\right\vert _{\mathbb{D}}+...+\left\vert \Omega (\zeta _{k})-\Omega
(\zeta _{k-1})\right\vert _{\mathbb{D}}+...+\left\vert \Omega (\zeta
_{n})-\Omega (\zeta _{n-1})\right\vert _{\mathbb{D}}.
\end{equation*}

Now,%
\begin{eqnarray*}
\left\vert \Omega (\zeta _{r})-\Omega (\zeta _{r-1})\right\vert _{\mathbb{D}%
} &=&\left\vert a\Gamma (\zeta _{r})+b\Lambda (\zeta _{r})-a\Gamma (\zeta
_{r-1})-b\Lambda (\zeta _{r-1})\right\vert _{\mathbb{D}} \\
&\preceq &_{_{\mathbb{D}}}\left\vert a\right\vert _{\mathbb{D}}\left\vert
\Gamma (\zeta _{r})-\Gamma (\zeta _{r-1})\right\vert _{\mathbb{D}%
}+\left\vert b\right\vert _{\mathbb{D}}\left\vert \Lambda (\zeta
_{r})-\Lambda (\zeta _{r-1})\right\vert _{\mathbb{D}}
\end{eqnarray*}

Therefore%
\begin{equation*}
v\left( \Omega ;P\right) \preceq _{_{\mathbb{D}}}\left\vert a\right\vert _{%
\mathbb{D}}v\left( \Gamma ;P\right) +\left\vert b\right\vert _{\mathbb{D}%
}v\left( \Lambda ;P\right) .
\end{equation*}

Since $\Gamma $ and $\Lambda $ are functions of $\mathbb{D}-$bounded
variations on $\left[ \alpha ,\beta \right] _{\mathbb{D}},$ we have%
\begin{equation*}
v\left( \Gamma ;P\right) \preceq _{_{\mathbb{D}}}V(\Gamma ),
\end{equation*}

and%
\begin{equation*}
v\left( \Lambda ;P\right) \preceq _{_{\mathbb{D}}}V(\Lambda ),
\end{equation*}

for all partitions $P$ of $\left[ \alpha ,\beta \right] _{\mathbb{D}}.$

Therefore%
\begin{equation*}
v\left( \Omega ;P\right) \preceq _{_{\mathbb{D}}}\left\vert a\right\vert _{%
\mathbb{D}}V(\Gamma )+\left\vert b\right\vert _{\mathbb{D}}V(\Lambda ),
\end{equation*}

for all partitions $P$ of $\left[ \alpha ,\beta \right] _{\mathbb{D}}.$

This shows that%
\begin{eqnarray*}
\sup\nolimits_{\mathbb{D}}\left\{ v\left( \Omega ;P\right) :P\text{ is a
partition of }\left[ \alpha ,\beta \right] _{\mathbb{D}}\right\} &\preceq
&_{_{\mathbb{D}}}\left\vert a\right\vert _{\mathbb{D}}V(\Gamma )+\left\vert
b\right\vert _{\mathbb{D}}V(\Lambda ) \\
&\preceq &_{_{\mathbb{D}}}M,\text{ for some }M\in \mathbb{D}^{+}.
\end{eqnarray*}

Hence $\Omega (=a\Gamma +b\Lambda )$ is a function of $\mathbb{D}-$bounded
variation on $\left[ \alpha ,\beta \right] _{\mathbb{D}}$ and%
\begin{equation*}
V(a\Gamma +b\Lambda )\preceq _{_{\mathbb{D}}}\left\vert a\right\vert _{%
\mathbb{D}}V(\Gamma )+\left\vert b\right\vert _{\mathbb{D}}V(\Lambda ).
\end{equation*}
\end{proof}

\begin{definition}
A function $\Gamma :\left[ \alpha ,\beta \right] _{\mathbb{D}}\rightarrow 
\mathbb{BC}$ is $\mathbb{D}-$differentiable at $\tau =t\mathbf{e}_{1}+s%
\mathbf{e}_{2}\in \left[ \alpha ,\beta \right] _{\mathbb{D}}$ if 
\begin{equation*}
\lim\limits_{h\rightarrow 0,h\notin \mathbb{O}}\frac{\Gamma \left( \tau
+h\right) -\Gamma \left( \tau \right) }{h}
\end{equation*}%
exists in $\mathbb{D}$. Then we say%
\begin{equation*}
\Gamma ^{\prime }\left( \tau \right) =\lim\limits_{h\rightarrow 0,h\notin 
\mathbb{O}}\frac{\Gamma \left( \tau +h\right) -\Gamma \left( \tau \right) }{h%
},
\end{equation*}%
the $\mathbb{D}-$derivative of $\Gamma $ at $\tau .$
\end{definition}

A $\mathbb{D}-$path $\Gamma :\left[ \alpha ,\beta \right] _{\mathbb{D}%
}\rightarrow \mathbb{BC}$ is called $\mathbb{D}-$smooth if $\Gamma ^{\prime
}\left( \tau \right) $ exists for each $\tau \in \left[ \alpha ,\beta \right]
_{\mathbb{D}}$. Also $\Gamma $ is piecewise $\mathbb{D}-$smooth if there is
a partition $P=\left\{ \zeta _{0},\zeta _{1},\zeta _{2},...,\zeta
_{n}\right\} $ of $\left[ \alpha ,\beta \right] _{\mathbb{D}}$ such that $%
\Gamma $ is $\mathbb{D}-$smooth on each subinterval $\left[ \zeta
_{k-1},\zeta _{k}\right] .$

One can easily check that if $\gamma _{1},\gamma _{2}$ are (piecewise)
smooth the $\Gamma =\gamma _{1}\mathbf{e}_{1}+\gamma _{2}\mathbf{e}_{2}$ is
(piecewise) $\mathbb{D}-$smooth.

\begin{proposition}
\label{P2} Let $\Gamma (=\gamma _{1}\mathbf{e}_{1}+\gamma _{2}\mathbf{e}%
_{2}):\left[ \alpha ,\beta \right] _{\mathbb{D}}\rightarrow \mathbb{BC},$
for $\left[ \alpha ,\beta \right] _{\mathbb{D}}\subset \mathbb{D},$ is a $%
\mathbb{D-}$path. Then $\Gamma $ is of $\mathbb{D}-$bounded variation if and
only if $\gamma _{1},\gamma _{2}$ are of bounded variation. Also 
\begin{equation*}
V\left( \Gamma \right) =V\left( \gamma _{1}\right) \mathbf{e}_{1}+V(\gamma
_{2})\mathbf{e}_{2}.
\end{equation*}
\end{proposition}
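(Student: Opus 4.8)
The plan is to push everything through the idempotent decomposition. Write $\alpha =a_{1}\mathbf{e}_{1}+a_{2}\mathbf{e}_{2}$ and $\beta =b_{1}\mathbf{e}_{1}+b_{2}\mathbf{e}_{2}$, so that $\Gamma (\tau )=\gamma _{1}(t)\mathbf{e}_{1}+\gamma _{2}(s)\mathbf{e}_{2}$ for $\tau =t\mathbf{e}_{1}+s\mathbf{e}_{2}$, with $\gamma _{1}:[a_{1},b_{1}]\rightarrow \mathbb{C}$ and $\gamma _{2}:[a_{2},b_{2}]\rightarrow \mathbb{C}$. The first step is to set up the correspondence between $\mathbb{D}-$partitions of $[\alpha ,\beta ]_{\mathbb{D}}$ and pairs of ordinary partitions of $[a_{1},b_{1}]$ and $[a_{2},b_{2}]$. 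If $P=\{\zeta _{0},\dots ,\zeta _{n}\}$ is a $\mathbb{D}-$partition and we write $\zeta _{k}=t_{k}\mathbf{e}_{1}+s_{k}\mathbf{e}_{2}$, then $\zeta _{k}-\zeta _{k-1}\in \mathbb{D}^{+}\backslash \mathbb{O}_{0}$ is equivalent to $t_{k}-t_{k-1}>0$ and $s_{k}-s_{k-1}>0$; together with $\zeta _{0}=\alpha $ and $\zeta _{n}=\beta $ this makes $P_{1}:=\{t_{0},\dots ,t_{n}\}$ a partition of $[a_{1},b_{1}]$ and $P_{2}:=\{s_{0},\dots ,s_{n}\}$ a partition of $[a_{2},b_{2}]$, both with $n+1$ points. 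Conversely, given an \emph{arbitrary} partition $P_{1}$ of $[a_{1},b_{1}]$ with $n+1$ points, I would choose any partition $P_{2}$ of $[a_{2},b_{2}]$ with the same number of points (for instance the equally spaced one) and set $\zeta _{k}=t_{k}\mathbf{e}_{1}+s_{k}\mathbf{e}_{2}$; the resulting $P$ is a genuine $\mathbb{D}-$partition whose first coordinate projection is $P_{1}$. Hence the set of first (resp. second) coordinate projections of $\mathbb{D}-$partitions of $[\alpha ,\beta ]_{\mathbb{D}}$ is exactly the set of all partitions of $[a_{1},b_{1}]$ (resp. $[a_{2},b_{2}]$).

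Next I would decompose the variation sum. For a $\mathbb{D}-$partition $P$ as above, $\Gamma (\zeta _{k})-\Gamma (\zeta _{k-1})=(\gamma _{1}(t_{k})-\gamma _{1}(t_{k-1}))\mathbf{e}_{1}+(\gamma _{2}(s_{k})-\gamma _{2}(s_{k-1}))\mathbf{e}_{2}$, so by the definition of the $\mathbb{D}-$modulus $\left\vert \Gamma (\zeta _{k})-\Gamma (\zeta _{k-1})\right\vert _{\mathbb{D}}=\left\vert \gamma _{1}(t_{k})-\gamma _{1}(t_{k-1})\right\vert \mathbf{e}_{1}+\left\vert \gamma _{2}(s_{k})-\gamma _{2}(s_{k-1})\right\vert \mathbf{e}_{2}$. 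Since $\mathbf{e}_{1}$ and $\mathbf{e}_{2}$ are orthogonal idempotents, summing over $k$ gives $v(\Gamma ;P)=v(\gamma _{1};P_{1})\mathbf{e}_{1}+v(\gamma _{2};P_{2})\mathbf{e}_{2}$.

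The equivalence of bounded variation then follows from the coordinatewise nature of $\preceq _{_{\mathbb{D}}}$. If $\Gamma $ is of $\mathbb{D}-$bounded variation with bound $M=M_{1}\mathbf{e}_{1}+M_{2}\mathbf{e}_{2}\in \mathbb{D}^{+}$, then $v(\gamma _{1};P_{1})\leq M_{1}$ and $v(\gamma _{2};P_{2})\leq M_{2}$ for every $\mathbb{D}-$partition, and since by the first step every partition of $[a_{1},b_{1}]$ arises as some $P_{1}$ and every partition of $[a_{2},b_{2}]$ as some $P_{2}$, both $\gamma _{1}$ and $\gamma _{2}$ are of bounded variation. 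Conversely, if $\gamma _{1},\gamma _{2}$ are of bounded variation, then $v(\Gamma ;P)=v(\gamma _{1};P_{1})\mathbf{e}_{1}+v(\gamma _{2};P_{2})\mathbf{e}_{2}\preceq _{_{\mathbb{D}}}V(\gamma _{1})\mathbf{e}_{1}+V(\gamma _{2})\mathbf{e}_{2}\in \mathbb{D}^{+}$ for every $\mathbb{D}-$partition, so $\Gamma $ is of $\mathbb{D}-$bounded variation. For the formula, apply Definition \ref{D3} to the $\mathbb{D}-$bounded set $\{v(\Gamma ;P):P\text{ a }\mathbb{D}\text{-partition}\}$: by the decomposition of the previous step its set of first real components is $\{v(\gamma _{1};P_{1}):P_{1}\text{ a partition of }[a_{1},b_{1}]\}$ and its set of second real components is $\{v(\gamma _{2};P_{2}):P_{2}\text{ a partition of }[a_{2},b_{2}]\}$ (here the surjectivity from the first step is used again), whence $V(\Gamma )=\sup \nolimits_{\mathbb{D}}\{v(\Gamma ;P)\}=V(\gamma _{1})\mathbf{e}_{1}+V(\gamma _{2})\mathbf{e}_{2}$.

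The only point that needs genuine care is the converse half of the first step — that every ordinary partition of $[a_{1},b_{1}]$ is the first coordinate projection of some $\mathbb{D}-$partition — because a $\mathbb{D}-$partition constrains the two coordinate partitions to have the same number of nodes, so one must explicitly pad the other coordinate. Once that is in place, the rest is bookkeeping: the $\mathbb{D}-$modulus, the order $\preceq _{_{\mathbb{D}}}$, and the $\mathbb{D}-$supremum all act coordinatewise, so the two real theories run in parallel and reassemble via $\mathbf{e}_{1},\mathbf{e}_{2}$.
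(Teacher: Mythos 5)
Your proposal is correct and follows essentially the same route as the paper: idempotent decomposition of $\Gamma$, the correspondence between $\mathbb{D}$-partitions and pairs of ordinary partitions, the identity $v(\Gamma ;P)=v(\gamma _{1};P_{1})\mathbf{e}_{1}+v(\gamma _{2};P_{2})\mathbf{e}_{2}$, and the coordinatewise behaviour of $\preceq _{_{\mathbb{D}}}$ and $\sup\nolimits_{\mathbb{D}}$. Your explicit attention to the surjectivity of the coordinate projections (padding one coordinate partition so the node counts match) is a point the paper passes over with the bare assertion that any pair $P_{1},P_{2}$ yields a $\mathbb{D}$-partition $P$, so if anything your write-up is slightly more careful on that step.
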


\begin{proof}
Since $\Gamma (=\gamma _{1}\mathbf{e}_{1}+\gamma _{2}\mathbf{e}_{2}):\left[
\alpha ,\beta \right] _{\mathbb{D}}\rightarrow \mathbb{BC}$ is a $\mathbb{D-}
$path, for $i=1,2,$ $\gamma _{i}:[\alpha _{i},\beta _{i}]\rightarrow \mathbb{%
C}$ are paths in $\mathbb{C}$ where $\alpha =\alpha _{1}\mathbf{e}%
_{1}+\alpha _{2}\mathbf{e}_{2}$ and $\beta =\beta _{1}\mathbf{e}_{1}+\beta
_{2}\mathbf{e}_{2}.$

Let $P=\left\{ \zeta _{0},\zeta _{1},\zeta _{2},...,\zeta _{n}\right\} $ be
a partition of $\left[ \alpha ,\beta \right] _{\mathbb{D}}.$ Taking $\zeta
_{i}=\zeta _{i}^{1}\mathbf{e}_{1}+\zeta _{i}^{2}\mathbf{e}_{2},$ we get two
partitions $P_{1}=\left\{ \zeta _{0}^{1},\zeta _{1}^{1},\zeta
_{2}^{1},...,\zeta _{n}^{1}\right\} $ and $P_{2}=\left\{ \zeta
_{0}^{2},\zeta _{1}^{2},\zeta _{2}^{2},...,\zeta _{n}^{2}\right\} $ of $%
[\alpha _{1},\beta _{1}]$ and $[\alpha _{2},\beta _{2}]$ respectively$.$
Again for any two partitions $P_{1}$ of $[\alpha _{1},\beta _{1}]$ and $%
P_{2} $ of $[\alpha _{2},\beta _{2}]$, we can get a partition $P$ of $\left[
\alpha ,\beta \right] _{\mathbb{D}}.$

Now

\begin{eqnarray*}
v\left( \Gamma ;P\right) &=&\dsum\limits_{k=1}^{n}\left\vert \Gamma \left(
\zeta _{k}\right) -\Gamma \left( \zeta _{k-1}\right) \right\vert _{\mathbb{D}%
} \\
&=&\dsum\limits_{k=1}^{n}\left\vert \gamma _{1}(\zeta _{k}^{1})-\gamma
_{1}(\zeta _{k-1}^{1})\right\vert \mathbf{e}_{1}+\dsum\limits_{k=1}^{n}\left%
\vert \gamma _{2}(\zeta _{k}^{2})-\gamma _{2}(\zeta _{k-1}^{2})\right\vert 
\mathbf{e}_{2}
\end{eqnarray*}

Therefore%
\begin{equation}
v\left( \Gamma ;P\right) =v\left( \gamma _{1};P_{1}\right) \mathbf{e}%
_{1}+v\left( \gamma _{2};P_{2}\right) \mathbf{e}_{2}.  \label{1}
\end{equation}

Now it is clear from (\ref{1}) that for $M=M_{1}\mathbf{e}_{1}+M_{2}\mathbf{e%
}_{2}\in \mathbb{D}^{+}$ 
\begin{equation*}
v\left( \Gamma ;P\right) \preceq _{_{\mathbb{D}}}M\Leftrightarrow v\left(
\gamma _{1};P_{1}\right) \leq M_{1}\text{ and }v\left( \gamma
_{2};P_{2}\right) \leq M_{2}.
\end{equation*}

So, $\Gamma $ is of $\mathbb{D}-$bounded variation iff $\gamma _{1}$ and $%
\gamma _{2}$ are of bounded variation.

Also,%
\begin{eqnarray*}
V(\Gamma ) &=&\sup\nolimits_{\mathbb{D}}\left\{ v\left( \Gamma ;P\right) :P%
\text{ is a partition of }\left[ \alpha ,\beta \right] _{\mathbb{D}}\right\}
\\
&=&\sup\nolimits_{\mathbb{D}}\{v\left( \gamma _{1};P_{1}\right) \mathbf{e}%
_{1}+v\left( \gamma _{2};P_{2}\right) \mathbf{e}_{2}:P_{i}\text{ are
partitions of }[\alpha _{i},\beta _{i}],i=1,2\} \\
&=&V(\gamma _{1})\mathbf{e}_{1}+V(\gamma _{2})\mathbf{e}_{2},\text{ using
Definition \ref{D3}.}
\end{eqnarray*}
\end{proof}

\begin{proposition}
\label{P3} If $\Gamma :\left[ \alpha ,\beta \right] _{\mathbb{D}}\rightarrow 
\mathbb{BC}$ is piecewise $\mathbb{D}-$smooth then $\Gamma $ is of $\mathbb{D%
}-$bounded variation and 
\begin{equation*}
V\left( \Gamma \right) =\dint\limits_{\left[ \alpha ,\beta \right] _{\mathbb{%
D}}}\left\vert \Gamma ^{\prime }\left( \tau \right) \right\vert _{\mathbb{D}%
}d\tau 
\end{equation*}
\end{proposition}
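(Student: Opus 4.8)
The plan is to reduce the statement to the classical scalar theory of rectifiable curves via the idempotent decomposition, in exactly the spirit of Proposition~\ref{P2}. Write $\alpha=\alpha_{1}\mathbf{e}_{1}+\alpha_{2}\mathbf{e}_{2}$, $\beta=\beta_{1}\mathbf{e}_{1}+\beta_{2}\mathbf{e}_{2}$ and $\Gamma=\gamma_{1}\mathbf{e}_{1}+\gamma_{2}\mathbf{e}_{2}$ with $\gamma_{i}:[\alpha_{i},\beta_{i}]\rightarrow\mathbb{C}$. First I would record how $\mathbb{D}$-differentiability passes through the decomposition: for $h=h_{1}\mathbf{e}_{1}+h_{2}\mathbf{e}_{2}$ with $h_{1}h_{2}\neq0$ one has $h^{-1}=h_{1}^{-1}\mathbf{e}_{1}+h_{2}^{-1}\mathbf{e}_{2}$, so the difference quotient of $\Gamma$ at $\tau=t\mathbf{e}_{1}+s\mathbf{e}_{2}$ equals $\frac{\gamma_{1}(t+h_{1})-\gamma_{1}(t)}{h_{1}}\mathbf{e}_{1}+\frac{\gamma_{2}(s+h_{2})-\gamma_{2}(s)}{h_{2}}\mathbf{e}_{2}$, and letting $h\rightarrow0$ with $h\notin\mathbb{O}$ forces $h_{1},h_{2}\rightarrow0$ with both nonzero. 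Hence piecewise $\mathbb{D}$-smoothness of $\Gamma$ makes $\gamma_{1}$ and $\gamma_{2}$ piecewise smooth paths in $\mathbb{C}$, at every point where the derivatives exist $\Gamma^{\prime}(\tau)=\gamma_{1}^{\prime}(t)\mathbf{e}_{1}+\gamma_{2}^{\prime}(s)\mathbf{e}_{2}$, and therefore $\left\vert\Gamma^{\prime}(\tau)\right\vert_{\mathbb{D}}=\left\vert\gamma_{1}^{\prime}(t)\right\vert\mathbf{e}_{1}+\left\vert\gamma_{2}^{\prime}(s)\right\vert\mathbf{e}_{2}$.

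Next I would invoke the classical fact (see \cite{con}) that a piecewise smooth path $\gamma:[a,b]\rightarrow\mathbb{C}$ is of bounded variation with $V(\gamma)=\int_{a}^{b}\left\vert\gamma^{\prime}(t)\right\vert\,dt$, applied to $\gamma_{1}$ on $[\alpha_{1},\beta_{1}]$ and to $\gamma_{2}$ on $[\alpha_{2},\beta_{2}]$. Since both $\gamma_{1},\gamma_{2}$ are then of bounded variation, Proposition~\ref{P2} immediately gives that $\Gamma$ is of $\mathbb{D}$-bounded variation and
\[
V(\Gamma)=V(\gamma_{1})\mathbf{e}_{1}+V(\gamma_{2})\mathbf{e}_{2}=\left(\int_{\alpha_{1}}^{\beta_{1}}\left\vert\gamma_{1}^{\prime}(t)\right\vert\,dt\right)\mathbf{e}_{1}+\left(\int_{\alpha_{2}}^{\beta_{2}}\left\vert\gamma_{2}^{\prime}(s)\right\vert\,ds\right)\mathbf{e}_{2}.
\]
It then remains to recognize the right-hand side: since the hyperbolic line integral of a product-type integrand $g=g_{1}\mathbf{e}_{1}+g_{2}\mathbf{e}_{2}$ over $[\alpha,\beta]_{\mathbb{D}}$ is, by definition, $\left(\int_{\alpha_{1}}^{\beta_{1}}g_{1}\right)\mathbf{e}_{1}+\left(\int_{\alpha_{2}}^{\beta_{2}}g_{2}\right)\mathbf{e}_{2}$, combining this with the formula for $\left\vert\Gamma^{\prime}(\tau)\right\vert_{\mathbb{D}}$ shows the displayed expression is precisely $\int_{[\alpha,\beta]_{\mathbb{D}}}\left\vert\Gamma^{\prime}(\tau)\right\vert_{\mathbb{D}}\,d\tau$, as claimed.

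I do not expect a serious obstacle here. The only ingredient carrying real content is the scalar identity $V(\gamma)=\int_{a}^{b}\left\vert\gamma^{\prime}\right\vert$ for piecewise smooth $\gamma$, which I would either cite from \cite{con} or reprove: its upper half follows from $\gamma(\zeta_{k})-\gamma(\zeta_{k-1})=\int_{\zeta_{k-1}}^{\zeta_{k}}\gamma^{\prime}$ on each smooth piece together with the triangle inequality for integrals, giving $v(\gamma;P)\le\int_{a}^{b}\left\vert\gamma^{\prime}\right\vert$ for every $P$; its lower half uses uniform continuity of $\gamma^{\prime}$ on each piece, so that on a sufficiently fine partition $\sum_{k}\left\vert\int_{\zeta_{k-1}}^{\zeta_{k}}\gamma^{\prime}\right\vert$ approximates $\sum_{k}\int_{\zeta_{k-1}}^{\zeta_{k}}\left\vert\gamma^{\prime}\right\vert=\int_{a}^{b}\left\vert\gamma^{\prime}\right\vert$ from below. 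The two remaining technical points — the passage between a partition of $[\alpha,\beta]_{\mathbb{D}}$ and the induced pair of real partitions, and the fact that $\sup\nolimits_{\mathbb{D}}$ acts componentwise (Definition~\ref{D3}) — have already been carried out inside the proof of Proposition~\ref{P2}, so they may simply be quoted.
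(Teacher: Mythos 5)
Your proposal is correct and follows essentially the same route as the paper's own proof: decompose $\Gamma$ into its idempotent components $\gamma_{1},\gamma_{2}$, cite the classical result from \cite{con} that a piecewise smooth path satisfies $V(\gamma_{i})=\int_{\alpha_{i}}^{\beta_{i}}\left\vert \gamma_{i}^{\prime}\right\vert$, and then apply Proposition~\ref{P2} together with the componentwise definition of the hyperbolic integral. The only difference is that you spell out why the difference quotient (and hence $\Gamma^{\prime}$) decomposes componentwise, a point the paper takes for granted.
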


\begin{proof}
Let $\Gamma =\gamma _{1}\mathbf{e}_{1}+\gamma _{2}\mathbf{e}_{2},$ $\alpha
=\alpha _{1}\mathbf{e}_{1}+\alpha _{2}\mathbf{e}_{2}$ and $\beta =\beta _{1}%
\mathbf{e}_{1}+\beta _{2}\mathbf{e}_{2}.$

Since $\Gamma :\left[ \alpha ,\beta \right] _{\mathbb{D}}\rightarrow \mathbb{%
BC}$ is piecewise $\mathbb{D}-$smooth, then $\gamma _{i}:[\alpha _{i},\beta
_{i}]\rightarrow 
%TCIMACRO{\U{2102} }%
%BeginExpansion
\mathbb{C}
%EndExpansion
$, for $i=1,2$ are piecewise smooth.

Then by Proposition $1.3(\cite{con},$ Chapter IV), $\gamma _{i}$ are of
bounded variation and 
\begin{equation*}
V(\gamma _{i})=\int\limits_{\alpha _{i}}^{\beta _{i}}\left\vert \gamma
_{i}^{^{\prime }}(\tau _{i})\right\vert d\tau _{i},\text{ for }i=1,2.
\end{equation*}

Then by Proposition \ref{P2}, $\Gamma $ is of $\mathbb{D}-$bounded variation
and 
\begin{equation*}
V(\Gamma )=V(\gamma _{1})\mathbf{e}_{1}+V(\gamma _{2})\mathbf{e}_{2}.
\end{equation*}

Since $\Gamma ^{^{\prime }}:\left[ \alpha ,\beta \right] _{\mathbb{D}%
}\rightarrow \mathbb{BC}$ is a $\mathbb{BC-}$function, then $\left\vert
\Gamma ^{^{\prime }}\right\vert _{\mathbb{D}}:\left[ \alpha ,\beta \right] _{%
\mathbb{D}}\rightarrow \mathbb{D}$ is a natural hyperbolic function (see $%
\cite{Tel}$) defined by $\left\vert \Gamma ^{^{\prime }}\right\vert _{%
\mathbb{D}}(\tau )=\left\vert \Gamma ^{^{\prime }}(\tau )\right\vert _{%
\mathbb{D}}$ for $\tau =\tau _{1}\mathbf{e}_{1}+\tau _{2}\mathbf{e}_{2}.$

Now%
\begin{eqnarray*}
\int\limits_{\lbrack \alpha ,\beta ]_{\mathbb{D}}}\left\vert \Gamma
^{^{\prime }}(\tau )\right\vert _{\mathbb{D}}d\tau &=&\left(
\int\limits_{\alpha _{1}}^{\beta _{1}}\left\vert \gamma _{1}^{^{\prime
}}(\tau _{1})\right\vert d\tau _{1}\right) \mathbf{e}_{1}+\left(
\int\limits_{\alpha _{2}}^{\beta _{2}}\left\vert \gamma _{2}^{^{\prime
}}(\tau _{2})\right\vert d\tau _{2}\right) \mathbf{e}_{2} \\
&=&V(\gamma _{1})\mathbf{e}_{1}+V(\gamma _{2})\mathbf{e}_{2} \\
&=&V(\Gamma ).
\end{eqnarray*}

So, we have 
\begin{equation*}
V\left( \Gamma \right) =\dint\limits_{\left[ \alpha ,\beta \right] _{\mathbb{%
D}}}\left\vert \Gamma ^{\prime }\left( \tau \right) \right\vert _{\mathbb{D}%
}d\tau
\end{equation*}
\end{proof}

\begin{theorem}
\label{T1} Let $\Gamma (=\gamma _{1}e_{1}+\gamma _{2}e_{2}):\left[ \alpha
,\beta \right] _{\mathbb{D}}\rightarrow \mathbb{BC}$ be product-type
function of $\mathbb{D}-$bounded variation and suppose that the product-type
function $f(=f_{1}e_{1}+f_{2}e_{2}):\left[ \alpha ,\beta \right] _{\mathbb{D}%
}\rightarrow \mathbb{BC}$ is $\mathbb{D-}$continuous. Then there is $I\in 
\mathbb{BC}$ such that for every $\epsilon _{\mathbb{D}}\succ _{\mathbb{D}}0$
there is a $\delta _{\mathbb{D}}\succ _{\mathbb{D}}0$ such that when $%
P=\left\{ \zeta _{0},\zeta _{1},\zeta _{2},...,\zeta _{n}\right\} $ be a
partition of $\left[ \alpha ,\beta \right] _{\mathbb{D}}$ with $\left\Vert
P\right\Vert _{\mathbb{D}}=\sup_{\mathbb{D}}\{l_{\mathbb{D}}\left( \left[
\zeta _{k-1},\zeta _{k}\right] _{\mathbb{D}}\right) :1\leq k\leq n\}\prec _{%
\mathbb{D}}\delta _{\mathbb{D}}$ then 
\begin{equation*}
\left\vert I-\sum\limits_{k=1}^{n}f(\tau _{k})[\Gamma (\zeta _{k})-\Gamma
(\zeta _{k-1})]\right\vert _{\mathbb{D}}\prec _{_{\mathbb{D}}}\epsilon _{%
\mathbb{D}}
\end{equation*}

for whatever choice of points $\tau _{k},$ $\zeta _{k-1}\preceq _{_{\mathbb{D%
}}}\tau _{k}\preceq _{_{\mathbb{D}}}\zeta _{k}.$
\end{theorem}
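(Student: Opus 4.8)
The plan is to reduce the whole statement to the classical existence theorem for Riemann–Stieltjes integrals via the idempotent decomposition. Write $\Gamma=\gamma_{1}\mathbf{e}_{1}+\gamma_{2}\mathbf{e}_{2}$, $f=f_{1}\mathbf{e}_{1}+f_{2}\mathbf{e}_{2}$, $\alpha=\alpha_{1}\mathbf{e}_{1}+\alpha_{2}\mathbf{e}_{2}$, $\beta=\beta_{1}\mathbf{e}_{1}+\beta_{2}\mathbf{e}_{2}$, so that $\gamma_{i},f_{i}:[\alpha_{i},\beta_{i}]\rightarrow\mathbb{C}$. By Proposition \ref{P2}, the hypothesis that $\Gamma$ is of $\mathbb{D}$-bounded variation is equivalent to $\gamma_{1},\gamma_{2}$ being of bounded variation on $[\alpha_{1},\beta_{1}]$, $[\alpha_{2},\beta_{2}]$ respectively, and since $f$ is a product-type $\mathbb{D}$-continuous function, $f_{1}$ and $f_{2}$ are continuous (the idempotent components of a $\mathbb{D}$-continuous product-type function are continuous, because the $\mathbb{D}$-modulus turns $\mathbb{D}$-convergence into coordinatewise convergence).

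First I would record the two bookkeeping identities. If $P=\{\zeta_{0},\dots,\zeta_{n}\}$ is a partition of $[\alpha,\beta]_{\mathbb{D}}$ with $\zeta_{k}=\zeta_{k}^{1}\mathbf{e}_{1}+\zeta_{k}^{2}\mathbf{e}_{2}$, then (as in the proof of Proposition \ref{P2}) $P_{i}=\{\zeta_{0}^{i},\dots,\zeta_{n}^{i}\}$ is a partition of $[\alpha_{i},\beta_{i}]$; since $l_{\mathbb{D}}([\zeta_{k-1},\zeta_{k}]_{\mathbb{D}})=(\zeta_{k}^{1}-\zeta_{k-1}^{1})\mathbf{e}_{1}+(\zeta_{k}^{2}-\zeta_{k-1}^{2})\mathbf{e}_{2}$, Definition \ref{D3} gives $\Vert P\Vert_{\mathbb{D}}=\Vert P_{1}\Vert\mathbf{e}_{1}+\Vert P_{2}\Vert\mathbf{e}_{2}$, where $\Vert P_{i}\Vert$ is the usual mesh. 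Moreover a tag $\tau_{k}$ with $\zeta_{k-1}\preceq_{\mathbb{D}}\tau_{k}\preceq_{\mathbb{D}}\zeta_{k}$ has the form $\tau_{k}=\tau_{k}^{1}\mathbf{e}_{1}+\tau_{k}^{2}\mathbf{e}_{2}$ with $\zeta_{k-1}^{i}\le\tau_{k}^{i}\le\zeta_{k}^{i}$, so $\tau_{k}^{i}$ is an admissible tag for $P_{i}$, and the Riemann–Stieltjes sum splits:
\begin{align*}
\sum_{k=1}^{n}f(\tau_{k})\left[\Gamma(\zeta_{k})-\Gamma(\zeta_{k-1})\right]
&=\left(\sum_{k=1}^{n}f_{1}(\tau_{k}^{1})\left[\gamma_{1}(\zeta_{k}^{1})-\gamma_{1}(\zeta_{k-1}^{1})\right]\right)\mathbf{e}_{1} \\
&\quad+\left(\sum_{k=1}^{n}f_{2}(\tau_{k}^{2})\left[\gamma_{2}(\zeta_{k}^{2})-\gamma_{2}(\zeta_{k-1}^{2})\right]\right)\mathbf{e}_{2}.
\end{align*}

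Then I would invoke the classical existence theorem for Riemann–Stieltjes integrals (cf.\ \cite{con}, Chapter~IV): since each $f_{i}$ is continuous and each $\gamma_{i}$ is of bounded variation, $I_{i}:=\int_{\alpha_{i}}^{\beta_{i}}f_{i}\,d\gamma_{i}$ exists, and for every $\epsilon_{i}>0$ there is $\delta_{i}>0$ such that $\Vert P_{i}\Vert\le\delta_{i}$ forces $\left\vert I_{i}-S_{i}\right\vert<\epsilon_{i}$ for every choice of tags, where $S_{i}=\sum_{k}f_{i}(\tau_{k}^{i})[\gamma_{i}(\zeta_{k}^{i})-\gamma_{i}(\zeta_{k-1}^{i})]$; here I choose $\delta_{i}$ strictly below the number supplied by that theorem so that the non-strict mesh bound $\Vert P_{i}\Vert\le\delta_{i}$ already suffices. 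Put $I=I_{1}\mathbf{e}_{1}+I_{2}\mathbf{e}_{2}$. Given $\epsilon_{\mathbb{D}}=\epsilon_{1}\mathbf{e}_{1}+\epsilon_{2}\mathbf{e}_{2}\succ_{\mathbb{D}}0$ (so $\epsilon_{1},\epsilon_{2}>0$), pick $\delta_{i}$ as above and set $\delta_{\mathbb{D}}=\delta_{1}\mathbf{e}_{1}+\delta_{2}\mathbf{e}_{2}\succ_{\mathbb{D}}0$. If $\Vert P\Vert_{\mathbb{D}}\prec_{\mathbb{D}}\delta_{\mathbb{D}}$, then $\Vert P_{i}\Vert\le\delta_{i}$ for $i=1,2$, hence $|I_{i}-S_{i}|<\epsilon_{i}$; by Definition~1 and the displayed splitting, $\left\vert I-\sum_{k}f(\tau_{k})[\Gamma(\zeta_{k})-\Gamma(\zeta_{k-1})]\right\vert_{\mathbb{D}}=|I_{1}-S_{1}|\mathbf{e}_{1}+|I_{2}-S_{2}|\mathbf{e}_{2}\prec_{\mathbb{D}}\epsilon_{1}\mathbf{e}_{1}+\epsilon_{2}\mathbf{e}_{2}=\epsilon_{\mathbb{D}}$, which is the claim.

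The argument is entirely a componentwise transfer, so the only genuinely delicate points are: (i) justifying that $\mathbb{D}$-continuity of the product-type $f$ yields continuity of $f_{1},f_{2}$ (needed to apply the scalar Riemann–Stieltjes theorem), and (ii) matching the strict order $\prec_{\mathbb{D}}$ on meshes with the strict mesh hypothesis of the classical theorem — this is the point one must be slightly careful about, and it is disposed of by the harmless shrinking of $\delta_{i}$ noted above. I expect (ii) to be the only place where a naive translation could go wrong; everything else is routine decomposition along $\mathbf{e}_{1},\mathbf{e}_{2}$.
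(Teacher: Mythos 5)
Your proposal is correct and follows essentially the same route as the paper: decompose $\Gamma$, $f$, the partition, the tags, and the mesh along $\mathbf{e}_{1},\mathbf{e}_{2}$, invoke the classical Riemann--Stieltjes existence theorem (Theorem 1.4 of \cite{con}) on each idempotent component, and reassemble $I=I_{1}\mathbf{e}_{1}+I_{2}\mathbf{e}_{2}$ with $\delta_{\mathbb{D}}=\delta_{1}\mathbf{e}_{1}+\delta_{2}\mathbf{e}_{2}$. Your extra care with the strict versus non-strict mesh inequality and the explicit splitting of the Riemann--Stieltjes sum are minor refinements of details the paper's proof passes over silently.
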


\begin{proof}
Since $\Gamma (=\gamma _{1}\mathbf{e}_{1}+\gamma _{2}\mathbf{e}_{2}):\left[
\alpha ,\beta \right] _{\mathbb{D}}\rightarrow \mathbb{BC}$ is a
product-type function of $\mathbb{D}-$bounded variation, by Proposition \ref%
{P2} for $i=1,2$ $\gamma _{i}:\left[ \alpha _{i},\beta _{i}\right]
\rightarrow 
%TCIMACRO{\U{2102} }%
%BeginExpansion
\mathbb{C}
%EndExpansion
$ are of bounded variation, where $\alpha =\alpha _{1}\mathbf{e}_{1}+\alpha
_{2}\mathbf{e}_{2}$ and $\beta =\beta _{1}\mathbf{e}_{1}+\beta _{2}\mathbf{e}%
_{2}.$

Also since $f(=f_{1}\mathbf{e}_{1}+f_{2}\mathbf{e}_{2}):\left[ \alpha ,\beta %
\right] _{\mathbb{D}}\rightarrow \mathbb{BC}$ is $\mathbb{D-}$continuous and
product-type function, for $i=1,2$ $f_{i}:\left[ \alpha _{i},\beta _{i}%
\right] \rightarrow 
%TCIMACRO{\U{2102} }%
%BeginExpansion
\mathbb{C}
%EndExpansion
$ are continuous functions.

Then by Theorem $1.4$ $\cite{con}$, for $i=1,2$ there exist $I_{i}\in 
%TCIMACRO{\U{2102} }%
%BeginExpansion
\mathbb{C}
%EndExpansion
$ such that for every $\epsilon _{i}>0$ there is a $\delta _{i}>0$ such that
when $P_{i}=\left\{ \zeta _{0}^{i},\zeta _{1}^{i},\zeta _{2}^{i},...,\zeta
_{n}^{i}\right\} $ are partitions of $\left[ \alpha _{i},\beta _{i}\right] $
with $\left\Vert P_{i}\right\Vert =\max \{(\zeta _{k}^{i}-\zeta
_{k-1}^{i}):1\leq k\leq n\}<\delta _{i}$ then 
\begin{equation*}
\left\vert I_{i}-\sum\limits_{k=1}^{n}f_{i}(\tau _{k}^{i})[\gamma _{i}(\zeta
_{k}^{i})-\gamma _{i}(\zeta _{k-1}^{i})]\right\vert <\epsilon _{i}
\end{equation*}

for whatever choice of points $\tau _{k}^{i},$ $\zeta _{k-1}^{i}\leq \tau
_{k}^{i}\leq \zeta _{k}^{i}.$

Let $I=I_{1}\mathbf{e}_{1}+I_{2}\mathbf{e}_{2},$ $\epsilon _{\mathbb{D}%
}=\epsilon _{i}\mathbf{e}_{1}+\epsilon _{2}\mathbf{e}_{2},$ $\delta _{%
\mathbb{D}}=\delta _{1}\mathbf{e}_{1}+\delta _{2}\mathbf{e}_{2},$ $\tau
_{k}=\tau _{k}^{1}\mathbf{e}_{1}+\tau _{k}^{2}\mathbf{e}_{2}$ and $\zeta
_{k}=\zeta _{0}^{i}\mathbf{e}_{1}+\zeta _{0}^{i}\mathbf{e}_{2}$ for $%
k=1,2,...,n.$

Then $P=\left\{ \zeta _{0},\zeta _{1},\zeta _{2},...,\zeta _{n}\right\} $ be
a partition of $\left[ \alpha ,\beta \right] _{\mathbb{D}}$ with $\left\Vert
P\right\Vert _{\mathbb{D}}=\left\Vert P_{1}\right\Vert \mathbf{e}%
_{1}+\left\Vert P_{2}\right\Vert \mathbf{e}_{2}\prec _{\mathbb{D}}\delta _{%
\mathbb{D}}$ and $\zeta _{k-1}\preceq _{\mathbb{D}}\tau _{k}\preceq _{%
\mathbb{D}}\zeta _{k}.$

Now%
\begin{eqnarray*}
\left\vert I-\sum\limits_{k=1}^{n}f(\tau _{k})[\Gamma (\zeta _{k})-\Gamma
(\zeta _{k-1})]\right\vert _{\mathbb{D}} &=&\left\vert
I_{1}-\sum\limits_{k=1}^{n}f_{1}(\tau _{k}^{1})[\gamma _{1}(\zeta
_{k}^{1})-\gamma _{i}(\zeta _{k-1}^{1})]\right\vert \mathbf{e}_{1} \\
&&+\left\vert I_{2}-\sum\limits_{k=1}^{n}f_{2}(\tau _{k}^{2})[\gamma
_{2}(\zeta _{k}^{2})-\gamma _{2}(\zeta _{k-1}^{2})]\right\vert \mathbf{e}_{2}
\\
&\prec &_{_{\mathbb{D}}}\epsilon _{i}\mathbf{e}_{1}+\epsilon _{i}\mathbf{e}%
_{2}=\epsilon _{\mathbb{D}}.
\end{eqnarray*}
\end{proof}

\begin{remark}
The number $I\in \mathbb{BC}$ of Theorem \ref{T1} is called the
Riemann-Stieljes $\mathbb{D}-$integral of $f$ with respect to $\Gamma $ over 
$\left[ \alpha ,\beta \right] _{\mathbb{D}}$ and is designated by%
\begin{equation*}
I=\int\limits_{\left[ \alpha ,\beta \right] _{\mathbb{D}}}fd\Gamma
=\int\limits_{\left[ \alpha ,\beta \right] _{\mathbb{D}}}f(\tau )d\Gamma
(\tau ).
\end{equation*}
\end{remark}

\begin{remark}
\label{R2} From the proof of Theorem \ref{T1} and Theorem $1.4$ $\cite{con}$%
, we can write%
\begin{equation*}
I=\int\limits_{\left[ \alpha ,\beta \right] _{\mathbb{D}}}fd\Gamma =\left(
\int\limits_{\alpha _{1}}^{\beta _{1}}f_{1}(t)d\gamma _{1}(t)\right) \mathbf{%
e}_{1}+\left( \int\limits_{\alpha _{2}}^{\beta _{2}}f_{2}(s)d\gamma
_{2}(s)\right) \mathbf{e}_{2}.
\end{equation*}
\end{remark}

The next result is very easy to prove, so we only state the result.

\begin{proposition}
Let $f$ and $g$ be two product-type bicomplex functions defined on $\left[
\alpha ,\beta \right] _{\mathbb{D}}$ and $\Gamma ,\Lambda :\left[ \alpha
,\beta \right] _{\mathbb{D}}\rightarrow \mathbb{BC}$ be product-type
functions of $\mathbb{D}-$bounded variation. Then for any $a,b\in \mathbb{BC}
$%
\begin{equation*}
(i)\text{ }\int\limits_{\left[ \alpha ,\beta \right] _{\mathbb{D}%
}}(af+bg)d\Gamma =a\int\limits_{\left[ \alpha ,\beta \right] _{\mathbb{D}%
}}fd\Gamma +b\int\limits_{\left[ \alpha ,\beta \right] _{\mathbb{D}%
}}gd\Gamma 
\end{equation*}%
\begin{equation*}
(ii)\text{ }\int\limits_{\left[ \alpha ,\beta \right] _{\mathbb{D}%
}}fd(a\Gamma +b\Lambda )=a\int\limits_{\left[ \alpha ,\beta \right] _{%
\mathbb{D}}}fd\Gamma +b\int\limits_{\left[ \alpha ,\beta \right] _{\mathbb{D}%
}}fd\Lambda .
\end{equation*}
\end{proposition}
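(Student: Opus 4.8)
The plan is to reduce both identities to the corresponding linearity properties of the ordinary complex Riemann--Stieltjes integral by means of the idempotent decomposition recorded in Remark~\ref{R2}. Throughout I write $a=a_{1}\mathbf{e}_{1}+a_{2}\mathbf{e}_{2}$ and $b=b_{1}\mathbf{e}_{1}+b_{2}\mathbf{e}_{2}$ with $a_{i},b_{i}\in \mathbb{C}(\mathbf{i}_{1})$, and, as in Remark~\ref{R2}, $f=f_{1}\mathbf{e}_{1}+f_{2}\mathbf{e}_{2}$, $g=g_{1}\mathbf{e}_{1}+g_{2}\mathbf{e}_{2}$, $\Gamma =\gamma_{1}\mathbf{e}_{1}+\gamma_{2}\mathbf{e}_{2}$, $\Lambda =\lambda_{1}\mathbf{e}_{1}+\lambda_{2}\mathbf{e}_{2}$, $\alpha =\alpha_{1}\mathbf{e}_{1}+\alpha_{2}\mathbf{e}_{2}$, $\beta =\beta_{1}\mathbf{e}_{1}+\beta_{2}\mathbf{e}_{2}$.

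The first step is to verify that the integrals appearing in (i) and (ii) are defined. Using $\mathbf{e}_{i}^{2}=\mathbf{e}_{i}$ and $\mathbf{e}_{1}\mathbf{e}_{2}=0$ one finds $af+bg=(a_{1}f_{1}+b_{1}g_{1})\mathbf{e}_{1}+(a_{2}f_{2}+b_{2}g_{2})\mathbf{e}_{2}$, so $af+bg$ is again product-type and $\mathbb{D}$-continuous, with components $a_{i}f_{i}+b_{i}g_{i}$ continuous; similarly $a\Gamma +b\Lambda =(a_{1}\gamma_{1}+b_{1}\lambda_{1})\mathbf{e}_{1}+(a_{2}\gamma_{2}+b_{2}\lambda_{2})\mathbf{e}_{2}$ is product-type, and by part~(b) of the first Proposition of this section together with Proposition~\ref{P2} it is of $\mathbb{D}$-bounded variation, with components $a_{i}\gamma_{i}+b_{i}\lambda_{i}$ of bounded variation. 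Hence Theorem~\ref{T1} applies to each integral occurring in (i) and (ii), so all of them are well defined.

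For (i), I would apply Remark~\ref{R2} to the integrand $af+bg$ to obtain
\begin{equation*}
\int_{[\alpha ,\beta ]_{\mathbb{D}}}(af+bg)\,d\Gamma =\left( \int_{\alpha_{1}}^{\beta_{1}}(a_{1}f_{1}+b_{1}g_{1})\,d\gamma_{1}\right) \mathbf{e}_{1}+\left( \int_{\alpha_{2}}^{\beta_{2}}(a_{2}f_{2}+b_{2}g_{2})\,d\gamma_{2}\right) \mathbf{e}_{2},
\end{equation*}
then invoke linearity of the complex Riemann--Stieltjes integral in the integrand (\cite{con}, Chapter~IV) to split each $\mathbb{C}$-integral as $a_{i}\int_{\alpha_{i}}^{\beta_{i}}f_{i}\,d\gamma_{i}+b_{i}\int_{\alpha_{i}}^{\beta_{i}}g_{i}\,d\gamma_{i}$, regroup the $\mathbf{e}_{1}$- and $\mathbf{e}_{2}$-parts, and finally use the product rule $(\mu_{1}\mathbf{e}_{1}+\mu_{2}\mathbf{e}_{2})(\nu_{1}\mathbf{e}_{1}+\nu_{2}\mathbf{e}_{2})=\mu_{1}\nu_{1}\mathbf{e}_{1}+\mu_{2}\nu_{2}\mathbf{e}_{2}$ together with Remark~\ref{R2} read in reverse to recognize the result as $a\int_{[\alpha ,\beta ]_{\mathbb{D}}}f\,d\Gamma +b\int_{[\alpha ,\beta ]_{\mathbb{D}}}g\,d\Gamma$. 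For (ii) the argument is word for word the same, except that Remark~\ref{R2} is applied to the integrator $a\Gamma +b\Lambda$ and one uses linearity of the complex integral in the integrator rather than in the integrand.

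I do not expect a genuine obstacle here. The only points that call for care are the structural checks of the second step --- that $af+bg$ stays product-type and $\mathbb{D}$-continuous, and that $a\Gamma +b\Lambda$ stays product-type of $\mathbb{D}$-bounded variation, so that Theorem~\ref{T1} genuinely supplies the integrals --- and the routine bookkeeping that bicomplex multiplication acts on the idempotent components precisely as Remark~\ref{R2} requires. Once these are in place, both identities are inherited verbatim from the one-variable linearity of the Riemann--Stieltjes integral.
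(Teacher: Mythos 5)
Your proof is correct: the paper itself omits the argument (stating only that ``the next result is very easy to prove''), and your reduction via the idempotent decomposition, Remark~\ref{R2}, and the linearity of the complex Riemann--Stieltjes integral from \cite{con} is exactly the componentwise technique used in every other proof in this section. The only caveat is that the proposition as stated does not explicitly assume $f$ and $g$ are $\mathbb{D}$-continuous, which you (reasonably) read in so that Theorem~\ref{T1} applies; that is an omission in the paper's statement, not a gap in your argument.
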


\begin{proposition}
Let $\Gamma (=\gamma _{1}\mathbf{e}_{1}+\gamma _{2}\mathbf{e}_{2}):\left[
\alpha ,\beta \right] _{\mathbb{D}}\rightarrow \mathbb{BC}$ be product-type
function of $\mathbb{D}-$bounded variation and $f(=f_{1}\mathbf{e}_{1}+f_{2}%
\mathbf{e}_{2}):\left[ \alpha ,\beta \right] _{\mathbb{D}}\rightarrow 
\mathbb{BC}$ be $\mathbb{D-}$continuous product-type function. If $\alpha
=\zeta _{0}\prec _{_{\mathbb{D}}}\zeta _{1}\prec _{_{\mathbb{D}}}...\prec
_{_{\mathbb{D}}}\zeta _{k-1}\prec _{_{\mathbb{D}}}\zeta _{k}\prec _{_{%
\mathbb{D}}}...\prec _{_{\mathbb{D}}}\zeta _{n}=\beta ,$ then%
\begin{equation*}
\int\limits_{\left[ \alpha ,\beta \right] _{\mathbb{D}}}fd\Gamma
=\sum\limits_{k=1}^{n}\int\limits_{\left[ \zeta _{k-1},\zeta _{k}\right] _{%
\mathbb{D}}}fd\Gamma .
\end{equation*}
\end{proposition}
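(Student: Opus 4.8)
The plan is to reduce everything to the classical additivity of the Riemann--Stieltjes integral over real intervals, using the idempotent decomposition. First I would write $\Gamma=\gamma_{1}\mathbf{e}_{1}+\gamma_{2}\mathbf{e}_{2}$, $f=f_{1}\mathbf{e}_{1}+f_{2}\mathbf{e}_{2}$, and decompose each partition point $\zeta_{j}=\zeta_{j}^{1}\mathbf{e}_{1}+\zeta_{j}^{2}\mathbf{e}_{2}$ for $j=0,1,\dots,n$. Since $\zeta_{j-1}\prec_{_{\mathbb{D}}}\zeta_{j}$ for each $j$, we get $\alpha_{i}=\zeta_{0}^{i}<\zeta_{1}^{i}<\dots<\zeta_{n}^{i}=\beta_{i}$ in $\mathbb{R}$ for $i=1,2$; moreover, by Proposition \ref{P2} each $\gamma_{i}$ is of bounded variation and by hypothesis each $f_{i}$ is continuous, so the classical Riemann--Stieltjes integrals $\int_{\alpha_{i}}^{\beta_{i}}f_{i}\,d\gamma_{i}$ exist.

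Next I would invoke the classical additivity property (Theorem~1.4 and its standard consequence in \cite{con}, Chapter IV) componentwise: for $i=1,2$,
\begin{equation*}
\int\limits_{\alpha_{i}}^{\beta_{i}}f_{i}\,d\gamma_{i}=\sum_{k=1}^{n}\int\limits_{\zeta_{k-1}^{i}}^{\zeta_{k}^{i}}f_{i}\,d\gamma_{i}.
\end{equation*}
Then I would multiply the $i=1$ identity by $\mathbf{e}_{1}$, the $i=2$ identity by $\mathbf{e}_{2}$, add, and apply Remark~\ref{R2} in both directions: on the left-hand side, $\bigl(\int_{\alpha_{1}}^{\beta_{1}}f_{1}\,d\gamma_{1}\bigr)\mathbf{e}_{1}+\bigl(\int_{\alpha_{2}}^{\beta_{2}}f_{2}\,d\gamma_{2}\bigr)\mathbf{e}_{2}=\int_{[\alpha,\beta]_{\mathbb{D}}}f\,d\Gamma$, and on each summand of the right-hand side, $\bigl(\int_{\zeta_{k-1}^{1}}^{\zeta_{k}^{1}}f_{1}\,d\gamma_{1}\bigr)\mathbf{e}_{1}+\bigl(\int_{\zeta_{k-1}^{2}}^{\zeta_{k}^{2}}f_{2}\,d\gamma_{2}\bigr)\mathbf{e}_{2}=\int_{[\zeta_{k-1},\zeta_{k}]_{\mathbb{D}}}f\,d\Gamma$, using the fact that $[\zeta_{k-1},\zeta_{k}]_{\mathbb{D}}=[\zeta_{k-1}^{1},\zeta_{k}^{1}]\mathbf{e}_{1}+[\zeta_{k-1}^{2},\zeta_{k}^{2}]\mathbf{e}_{2}$ and that the restriction of $\Gamma$ (resp. $f$) to the subinterval still has components $\gamma_{i}$ (resp. $f_{i}$) restricted to $[\zeta_{k-1}^{i},\zeta_{k}^{i}]$. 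Since $\mathbf{e}_{1},\mathbf{e}_{2}$ are linearly independent idempotents, this gives the claimed identity.

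The only genuine point requiring care — and the step I would be most careful about — is making sure that Remark~\ref{R2} is applicable to each subinterval: one must check that $\Gamma$ restricted to $[\zeta_{k-1},\zeta_{k}]_{\mathbb{D}}$ is still a product-type function of $\mathbb{D}$-bounded variation (immediate from Proposition~\ref{P2} together with Proposition~1(a), since a partition of a subinterval extends to a partition of $[\alpha,\beta]_{\mathbb{D}}$) and that $f$ restricted there is still $\mathbb{D}$-continuous and product-type (immediate, restriction of continuous is continuous). Once that observation is in place the proof is a two-line computation: decompose, apply the real additivity in each slot, recombine via the idempotents. There is no analytic obstacle; the work is entirely bookkeeping with the $\mathbf{e}_{1},\mathbf{e}_{2}$ components.
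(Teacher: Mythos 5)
Your proposal is correct and follows essentially the same route as the paper: decompose into idempotent components, apply the classical additivity of the Riemann--Stieltjes integral (Proposition 1.8 of \cite{con}) to each component, and recombine via Remark \ref{R2}. Your extra care about restrictions to subintervals being product-type and of bounded variation is a point the paper passes over silently, but it does not change the argument.
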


\begin{proof}
Let $\alpha =\alpha _{1}\mathbf{e}_{1}+\alpha _{2}\mathbf{e}_{2}$ , $\beta
=\beta _{1}\mathbf{e}_{1}+\beta _{2}\mathbf{e}_{2}$ and $\zeta _{k}=\zeta
_{k}^{1}\mathbf{e}_{1}+\zeta _{k}^{2}\mathbf{e}_{2}$ for $k=0,1,2,...,n.$

Now for $i=1,2$ $\gamma _{i}:\left[ \alpha _{i},\beta _{i}\right]
\rightarrow 
%TCIMACRO{\U{2102} }%
%BeginExpansion
\mathbb{C}
%EndExpansion
$ are of bounded variation and $f_{i}:\left[ \alpha _{i},\beta _{i}\right]
\rightarrow 
%TCIMACRO{\U{2102} }%
%BeginExpansion
\mathbb{C}
%EndExpansion
$ are continuous and also%
\begin{equation*}
\alpha _{i}=\zeta _{0}^{i}<\zeta _{1}^{i}<...<\zeta _{n}^{i}=\beta _{i}.
\end{equation*}

Then by Proposition $1.8\,\cite{con},$ we have for $i=1,2$%
\begin{equation*}
\int\limits_{\alpha _{i}}^{\beta _{i}}f_{i}d\gamma
_{i}=\sum\limits_{k=1}^{n}\int\limits_{\zeta _{k-1}^{i}}^{\zeta
_{k}^{i}}f_{i}d\gamma _{i}.
\end{equation*}

By Remark \ref{R2} we have%
\begin{eqnarray*}
\int\limits_{\left[ \alpha ,\beta \right] _{\mathbb{D}}}fd\Gamma &=&\left(
\int\limits_{\alpha _{1}}^{\beta _{1}}f_{1}(t)d\gamma _{1}(t)\right) \mathbf{%
e}_{1}+\left( \int\limits_{\alpha _{2}}^{\beta _{2}}f_{2}(s)d\gamma
_{2}(s)\right) \mathbf{e}_{2} \\
&=&\left( \sum\limits_{k=1}^{n}\int\limits_{\zeta _{k-1}^{1}}^{\zeta
_{k}^{1}}f_{1}(t)d\gamma _{1}(t)\right) \mathbf{e}_{1}+\left(
\sum\limits_{k=1}^{n}\int\limits_{\zeta _{k-1}^{2}}^{\zeta
_{k}^{2}}f_{2}(s)d\gamma _{2}(s)\right) \mathbf{e}_{2} \\
&=&\sum\limits_{k=1}^{n}\int\limits_{\left[ \zeta _{k-1},\zeta _{k}\right] _{%
\mathbb{D}}}fd\Gamma .
\end{eqnarray*}
\end{proof}

\begin{theorem}
If $\Gamma (=\gamma _{1}\mathbf{e}_{1}+\gamma _{2}\mathbf{e}_{2}):\left[
\alpha ,\beta \right] _{\mathbb{D}}\rightarrow \mathbb{BC}$ is piecewise $%
\mathbb{D-}$smooth and $f(=f_{1}\mathbf{e}_{1}+f_{2}\mathbf{e}_{2}):\left[
\alpha ,\beta \right] _{\mathbb{D}}\rightarrow \mathbb{BC}$ be $\mathbb{D-}$%
continuous product-type function, then%
\begin{equation*}
\int\limits_{\left[ \alpha ,\beta \right] _{\mathbb{D}}}fd\Gamma
=\int\limits_{\left[ \alpha ,\beta \right] _{\mathbb{D}}}f(\tau )\Gamma
^{\prime }(\tau )d\tau .
\end{equation*}
\end{theorem}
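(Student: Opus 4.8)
The plan is to reduce the hyperbolic (bicomplex) statement to the two classical one-dimensional statements via the idempotent decomposition, exactly as in the proofs of Proposition~\ref{P2}, Proposition~\ref{P3} and Theorem~\ref{T1}. Write $\Gamma=\gamma_1\mathbf{e}_1+\gamma_2\mathbf{e}_2$, $f=f_1\mathbf{e}_1+f_2\mathbf{e}_2$, $\alpha=\alpha_1\mathbf{e}_1+\alpha_2\mathbf{e}_2$, $\beta=\beta_1\mathbf{e}_1+\beta_2\mathbf{e}_2$. First I would observe that since $\Gamma$ is piecewise $\mathbb{D}$-smooth, each $\gamma_i\colon[\alpha_i,\beta_i]\to\mathbb{C}$ is piecewise smooth, hence (by Proposition~1.3 of \cite{con}, Chapter~IV, and Proposition~\ref{P3}) of bounded variation, so $\Gamma$ is of $\mathbb{D}$-bounded variation and the integral $\int_{[\alpha,\beta]_{\mathbb{D}}}f\,d\Gamma$ exists by Theorem~\ref{T1}. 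Likewise $f$ being $\mathbb{D}$-continuous and product-type means each $f_i$ is continuous, so the classical Riemann--Stieltjes integrals $\int_{\alpha_i}^{\beta_i}f_i\,d\gamma_i$ exist.

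The core of the argument is to invoke the classical line-integral identity: by Proposition~1.3 (or the relevant theorem on integration with respect to a piecewise-smooth path) in \cite{con}, Chapter~IV, for $i=1,2$ we have
\begin{equation*}
\int\limits_{\alpha_i}^{\beta_i}f_i(\tau_i)\,d\gamma_i(\tau_i)=\int\limits_{\alpha_i}^{\beta_i}f_i(\tau_i)\gamma_i^{\prime}(\tau_i)\,d\tau_i.
\end{equation*}
Next I would identify both sides of the claimed hyperbolic identity with their idempotent components. For the left-hand side, Remark~\ref{R2} gives
\begin{equation*}
\int\limits_{[\alpha,\beta]_{\mathbb{D}}}f\,d\Gamma=\left(\int\limits_{\alpha_1}^{\beta_1}f_1(t)\,d\gamma_1(t)\right)\mathbf{e}_1+\left(\int\limits_{\alpha_2}^{\beta_2}f_2(s)\,d\gamma_2(s)\right)\mathbf{e}_2.
\end{equation*}
For the right-hand side, I must check that the product-type function $\tau\mapsto f(\tau)\Gamma^{\prime}(\tau)$ decomposes as $f_1\gamma_1^{\prime}\mathbf{e}_1+f_2\gamma_2^{\prime}\mathbf{e}_2$ (using that $\mathbf{e}_1\mathbf{e}_2=0$, $\mathbf{e}_i^2=\mathbf{e}_i$, and that $\mathbb{D}$-differentiation acts componentwise on the idempotent parts, so $\Gamma^{\prime}=\gamma_1^{\prime}\mathbf{e}_1+\gamma_2^{\prime}\mathbf{e}_2$), and then that the hyperbolic integral of a product-type $\mathbb{D}$-valued (here $\mathbb{BC}$-valued) integrand over $[\alpha,\beta]_{\mathbb{D}}$ splits as $\int_{[\alpha,\beta]_{\mathbb{D}}}(g_1\mathbf{e}_1+g_2\mathbf{e}_2)\,d\tau=(\int_{\alpha_1}^{\beta_1}g_1\,d\tau_1)\mathbf{e}_1+(\int_{\alpha_2}^{\beta_2}g_2\,d\tau_2)\mathbf{e}_2$ — the same componentwise principle already used in Proposition~\ref{P3}. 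Applying the classical identity componentwise then matches the two sides.

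Assembling: the $\mathbf{e}_1$-component of $\int_{[\alpha,\beta]_{\mathbb{D}}}f\,d\Gamma$ equals $\int_{\alpha_1}^{\beta_1}f_1\,d\gamma_1=\int_{\alpha_1}^{\beta_1}f_1\gamma_1^{\prime}\,dt$, which is the $\mathbf{e}_1$-component of $\int_{[\alpha,\beta]_{\mathbb{D}}}f(\tau)\Gamma^{\prime}(\tau)\,d\tau$, and similarly for $\mathbf{e}_2$; since a hyperbolic number is determined by its two idempotent components, the two integrals are equal. I expect the only genuinely delicate point to be the bookkeeping for the piecewise case: one should first restrict to a partition $\alpha=\zeta_0\prec_{\mathbb{D}}\cdots\prec_{\mathbb{D}}\zeta_m=\beta$ on whose subintervals $\Gamma$ is $\mathbb{D}$-smooth, apply the smooth case on each $[\zeta_{j-1},\zeta_j]_{\mathbb{D}}$, and then sum using the additivity proposition just proved (the analogue of Proposition~1.8 of \cite{con}); the idempotent reduction itself is routine once Remark~\ref{R2} and the componentwise behaviour of $\Gamma^{\prime}$ are in hand.
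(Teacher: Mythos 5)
Your proposal is correct and follows essentially the same route as the paper: reduce to the idempotent components, apply the classical identity $\int_{\alpha_i}^{\beta_i}f_i\,d\gamma_i=\int_{\alpha_i}^{\beta_i}f_i\gamma_i^{\prime}\,dt_i$ (the paper cites Theorem~1.9 of \cite{con}, Chapter~IV, which already covers the piecewise-smooth case, so your extra partition-and-sum step is unnecessary but harmless), and reassemble via Remark~\ref{R2} and the componentwise splitting of $\Gamma^{\prime}$ and of the hyperbolic integral.
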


\begin{proof}
Since $\Gamma $ is piecewise $\mathbb{D-}$smooth, for $i=1,2$ $\gamma _{i}:%
\left[ \alpha _{i},\beta _{i}\right] \rightarrow 
%TCIMACRO{\U{2102} }%
%BeginExpansion
\mathbb{C}
%EndExpansion
$ are piecewise smooth, where $\alpha =\alpha _{1}\mathbf{e}_{1}+\alpha _{2}%
\mathbf{e}_{2}$ , $\beta =\beta _{1}\mathbf{e}_{1}+\beta _{2}\mathbf{e}_{2}.$

Also since $f$ is $\mathbb{D-}$continuous product-type function, $i=1,2$ $%
f_{i}:\left[ \alpha _{i},\beta _{i}\right] \rightarrow 
%TCIMACRO{\U{2102} }%
%BeginExpansion
\mathbb{C}
%EndExpansion
$ are continuous function.

Then by Theorem $1.9$ $\cite{con},$ for $i=1,2$%
\begin{equation}
\int\limits_{\alpha _{i}}^{\beta _{i}}f_{i}d\gamma _{i}=\int\limits_{\alpha
_{i}}^{\beta _{i}}f_{i}(t_{i})\gamma _{i}^{\prime }(t_{i})dt_{i}.  \label{2}
\end{equation}

Let $\tau =t_{1}\mathbf{e}_{1}+t_{2}\mathbf{e}_{2}.$ Then $\tau \in \left[
\alpha ,\beta \right] _{\mathbb{D}}$ and $\Gamma ^{\prime }(\tau )=\gamma
_{1}^{\prime }(t_{1})\mathbf{e}_{1}+\gamma _{2}^{\prime }(t_{2})\mathbf{e}%
_{2}.$

By Remark \ref{R2}%
\begin{eqnarray*}
\int\limits_{\left[ \alpha ,\beta \right] _{\mathbb{D}}}fd\Gamma &=&\left(
\int\limits_{\alpha _{1}}^{\beta _{1}}f_{1}d\gamma _{1}\right) \mathbf{e}%
_{1}+\left( \int\limits_{\alpha _{2}}^{\beta _{2}}f_{2}d\gamma _{2}\right) 
\mathbf{e}_{2} \\
&=&\left( \int\limits_{\alpha _{1}}^{\beta _{1}}f_{1}(t_{1})\gamma
_{1}^{\prime }(t_{1})dt_{1}\right) \mathbf{e}_{1}+\left( \int\limits_{\alpha
_{2}}^{\beta _{2}}f_{2}(t_{2})\gamma _{2}^{\prime }(t_{2})dt_{2}\right) 
\mathbf{e}_{2},\text{ by \ref{2}} \\
&=&\int\limits_{\left[ \alpha ,\beta \right] _{\mathbb{D}}}f(\tau )\Gamma
^{\prime }(\tau )d\tau .
\end{eqnarray*}
\end{proof}

If $\Gamma (=\gamma _{1}\mathbf{e}_{1}+\gamma _{2}\mathbf{e}_{2}):\left[
\alpha ,\beta \right] _{\mathbb{D}}\rightarrow \mathbb{BC}$ is a $\mathbb{D-}
$path, then the set $\{\Gamma (\tau ):\alpha \preceq _{_{\mathbb{D}}}\tau
\preceq _{_{\mathbb{D}}}\beta \}$ is called the trace of $\Gamma $ and is
denoted by $\{\Gamma \}.$ $\Gamma $ is a rectifiable $\mathbb{D-}$path if $%
\Gamma $ is a function of $\mathbb{D}-$bounded variation. For a partition $P$
of $\left[ \alpha ,\beta \right] _{\mathbb{D}},$ $v(\Gamma ;P)$ is the sum
of hyperbolic lengths of the line segment connecting points on the trace of $%
\Gamma .$ So $\Gamma $ is rectifiable if it has finite hyperbolic length and
its length is $V(\Gamma ).$ If $\Gamma $ is piecewise $\mathbb{D-}$smooth,
then $\Gamma $ is rectifiable and by Proposition \ref{P3}, its legth is $%
\dint\limits_{\left[ \alpha ,\beta \right] _{\mathbb{D}}}\left\vert \Gamma
^{\prime }\left( \tau \right) \right\vert _{\mathbb{D}}d\tau .$

If $\Gamma :\left[ \alpha ,\beta \right] _{\mathbb{D}}\rightarrow \mathbb{BC}
$ is a rectifiable $\mathbb{D-}$path with $\{\Gamma \}\subset \mathbb{%
E\subset BC}$ and $f:\mathbb{E\rightarrow BC}$ is $\mathbb{D-}$continuous
product-type function, then $f\circ \Gamma :\left[ \alpha ,\beta \right] _{%
\mathbb{D}}\rightarrow \mathbb{BC}$ is a $\mathbb{D-}$continuous
product-type function.

\begin{remark}
\label{R3} If $\Gamma (=\gamma _{1}\mathbf{e}_{1}+\gamma _{2}\mathbf{e}_{2}):%
\left[ \alpha ,\beta \right] _{\mathbb{D}}\rightarrow \mathbb{BC}$ is a $%
\mathbb{D-}$path, then $\{\Gamma \}=\{\gamma _{1}\}\mathbf{e}_{1}+\{\gamma
_{2}\}\mathbf{e}_{2}.$
\end{remark}

\begin{definition}
If $\Gamma :\left[ \alpha ,\beta \right] _{\mathbb{D}}\rightarrow \mathbb{BC}
$ is a rectifiable $\mathbb{D-}$path and $f$ is a product-type function
defined and $\mathbb{D-}$continuous on the trace of $\Gamma $ then the
(line) integral of $f$ along $\Gamma $ is 
\begin{equation*}
\dint\limits_{\left[ \alpha ,\beta \right] _{\mathbb{D}}}f(\Gamma (\tau
))d\Gamma (\tau ).
\end{equation*}
\end{definition}

This line integral is also denoted by 
\begin{equation*}
\int\limits_{\Gamma }f=\int\limits_{\Gamma }f(z)dz.
\end{equation*}

\begin{remark}
\label{R4} If $\Gamma (=\gamma _{1}\mathbf{e}_{1}+\gamma _{2}\mathbf{e}_{2}):%
\left[ \alpha ,\beta \right] _{\mathbb{D}}\rightarrow \mathbb{BC}$ is a
rectifiable $\mathbb{D-}$path and $f=(f_{1}\mathbf{e}_{1}+f_{2}\mathbf{e}%
_{2})$ is a product-type function defined and $\mathbb{D-}$continuous on $%
\{\Gamma \}$, then it is easy to verify that%
\begin{equation*}
\int\limits_{\Gamma }f=\left( \int\limits_{\gamma _{1}}f_{1}\right) \mathbf{e%
}_{1}+\left( \int\limits_{\gamma _{2}}f_{2}\right) \mathbf{e}_{2}.
\end{equation*}
\end{remark}

\begin{definition}
A function $\Phi :\left[ \lambda ,\mu \right] _{\mathbb{D}}\rightarrow \left[
\alpha ,\beta \right] _{\mathbb{D}}$ is said to be $\mathbb{D-}$monotone
function if any one of the following hold%
\begin{equation*}
i)\text{ }\Phi (\xi )\preceq _{_{\mathbb{D}}}\Phi (\tau )\text{ for any }\xi
,\tau \in \left[ \lambda ,\mu \right] _{\mathbb{D}}\text{ with }\xi \preceq
_{_{\mathbb{D}}}\tau ;
\end{equation*}%
\begin{equation*}
ii)\text{ }\Phi (\xi )\preceq _{_{\mathbb{D}}}\Phi (\tau )\text{ for any }%
\xi ,\tau \in \left[ \lambda ,\mu \right] _{\mathbb{D}}\text{ with }\xi
\succeq _{_{\mathbb{D}}}\tau .
\end{equation*}
\end{definition}

\begin{remark}
In the above definition if $(i)$ holds then $\Phi $ is said to be $\mathbb{D-%
}$monotone increasing function on $\left[ \lambda ,\mu \right] _{\mathbb{D}}$
and if $(ii)$ holds then $\Phi $ is said to be $\mathbb{D-}$monotone
decreasing function on $\left[ \lambda ,\mu \right] _{\mathbb{D}}.$
\end{remark}

\begin{remark}
If $\Phi =(\Phi _{1}\mathbf{e}_{1}+\Phi _{2}\mathbf{e}_{2}):\left[ \lambda
,\mu \right] _{\mathbb{D}}\rightarrow \left[ \alpha ,\beta \right] _{\mathbb{%
D}}$ is a $\mathbb{D-}$monotone increasing product-type function then for
each $i=1,2$ $\Phi _{i}:\left[ \lambda _{i},\mu _{i}\right] \rightarrow %
\left[ \alpha _{i},\beta _{i}\right] $ is monotone increasing function on $%
\left[ \lambda _{i},\mu _{i}\right] ,$ where $\lambda =\lambda _{1}\mathbf{e}%
_{1}+\lambda _{2}\mathbf{e}_{2},\mu =\mu _{1}\mathbf{e}_{1}+\mu _{2}\mathbf{e%
}_{2},\alpha =\alpha _{1}\mathbf{e}_{1}+\alpha _{2}\mathbf{e}_{2},\beta
=\beta _{1}\mathbf{e}_{1}+\beta _{2}\mathbf{e}_{2}.$
\end{remark}

If $\Gamma :\left[ \alpha ,\beta \right] _{\mathbb{D}}\rightarrow \mathbb{BC}
$ is a rectifiable $\mathbb{D-}$path and $\Phi :\left[ \lambda ,\mu \right]
_{\mathbb{D}}\rightarrow \left[ \alpha ,\beta \right] _{\mathbb{D}}$ is a $%
\mathbb{D-}$continuous, $\mathbb{D-}$monotone increasing function with $\Phi
(\left[ \lambda ,\mu \right] _{\mathbb{D}})=\left[ \alpha ,\beta \right] _{%
\mathbb{D}}$ (i.e., $\Phi (\lambda )=\alpha ,$ $\Phi (\mu )=\beta $) then $%
\Gamma \circ \Phi :\left[ \lambda ,\mu \right] _{\mathbb{D}}\rightarrow 
\mathbb{BC}$ is a $\mathbb{D-}$path such that $\{\Gamma \circ \Phi
\}=\{\Gamma \}.$ Also, if $\Phi (z)\notin \mathbb{O}$ for all $z\in \left[
\lambda ,\mu \right] _{\mathbb{D}},$ then $\Gamma \circ \Phi $ is
rectifiable because if $P=\left\{ \zeta _{0},\zeta _{1},\zeta _{2},...,\zeta
_{n}\right\} $ be a partition of $\left[ \lambda ,\mu \right] _{\mathbb{D}}$
then $P_{1}=\left\{ \Phi (\zeta _{0}),\Phi (\zeta _{1}),\Phi (\zeta
_{2}),...,\Phi (\zeta _{n})\right\} $ is a partition of $\left[ \alpha
,\beta \right] _{\mathbb{D}}.$ Therefore%
\begin{equation*}
\sum\limits_{k=1}^{n}\left\vert \Gamma (\Phi (\zeta _{k}))-\Gamma (\Phi
(\zeta _{k-1}))\right\vert _{\mathbb{D}}\preceq _{_{\mathbb{D}}}V(\Gamma )
\end{equation*}

so that $V(\Gamma \circ \Phi )\preceq _{\mathbb{D}}V(\Gamma )\prec _{\mathbb{%
D}}\infty _{\mathbb{D}}.$ So if $f$ is product-type $\mathbb{D-}$continuous
on $\,\{\Gamma \}=\{\Gamma \circ \Phi \}$ then $\int\limits_{\Gamma \circ
\Phi }f$ is well defined.

\begin{proposition}
If $\Gamma (=\gamma _{1}\mathbf{e}_{1}+\gamma _{2}\mathbf{e}_{2}):\left[
\alpha ,\beta \right] _{\mathbb{D}}\rightarrow \mathbb{BC}$ is a rectifiable 
$\mathbb{D-}$path and $\Phi =(\Phi _{1}\mathbf{e}_{1}+\Phi _{2}\mathbf{e}%
_{2}):\left[ \lambda ,\mu \right] _{\mathbb{D}}\rightarrow \left[ \alpha
,\beta \right] _{\mathbb{D}}$ is a $\mathbb{D-}$monotone increasing
product-type function with $\Phi (\lambda )=\alpha ,$ $\Phi (\mu )=\beta $
and $\Phi (z)\notin \mathbb{O}$ for all $z\in \left[ \lambda ,\mu \right] _{%
\mathbb{D}};$ then for any product-type $\mathbb{D-}$continuous function $%
f(=f_{1}\mathbf{e}_{1}+f_{2}\mathbf{e}_{2})$ on $\{\Gamma \}$%
\begin{equation*}
\int\limits_{\Gamma }f=\int\limits_{\Gamma \circ \Phi }f.
\end{equation*}
\end{proposition}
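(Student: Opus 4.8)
The plan is to reduce everything to its two idempotent components and then invoke the classical invariance of complex line integrals under an increasing change of parameter.

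First I would set up notation: write $\Phi=\Phi_{1}\mathbf{e}_{1}+\Phi_{2}\mathbf{e}_{2}$, $f=f_{1}\mathbf{e}_{1}+f_{2}\mathbf{e}_{2}$, $\lambda=\lambda_{1}\mathbf{e}_{1}+\lambda_{2}\mathbf{e}_{2}$, $\mu=\mu_{1}\mathbf{e}_{1}+\mu_{2}\mathbf{e}_{2}$, $\alpha=\alpha_{1}\mathbf{e}_{1}+\alpha_{2}\mathbf{e}_{2}$, $\beta=\beta_{1}\mathbf{e}_{1}+\beta_{2}\mathbf{e}_{2}$, with $\gamma_{i}:[\alpha_{i},\beta_{i}]\rightarrow\mathbb{C}$ and $f_{i}$ the corresponding component functions. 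Since $\Gamma$ and $\Phi$ are product-type, the composition splits as $\Gamma\circ\Phi=(\gamma_{1}\circ\Phi_{1})\mathbf{e}_{1}+(\gamma_{2}\circ\Phi_{2})\mathbf{e}_{2}$, and by the discussion immediately preceding this proposition $\Gamma\circ\Phi$ is a rectifiable $\mathbb{D}$-path with $\{\Gamma\circ\Phi\}=\{\Gamma\}$, so $f$ is $\mathbb{D}$-continuous product-type on its trace and $\int_{\Gamma\circ\Phi}f$ is well defined. Applying Remark \ref{R4} to both $\Gamma$ and $\Gamma\circ\Phi$ gives
\[
\int\limits_{\Gamma}f=\left(\int\limits_{\gamma_{1}}f_{1}\right)\mathbf{e}_{1}+\left(\int\limits_{\gamma_{2}}f_{2}\right)\mathbf{e}_{2},\qquad \int\limits_{\Gamma\circ\Phi}f=\left(\int\limits_{\gamma_{1}\circ\Phi_{1}}f_{1}\right)\mathbf{e}_{1}+\left(\int\limits_{\gamma_{2}\circ\Phi_{2}}f_{2}\right)\mathbf{e}_{2}.
\]

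Next I would check that each $\Phi_{i}:[\lambda_{i},\mu_{i}]\rightarrow[\alpha_{i},\beta_{i}]$ is a legitimate reparametrization in the classical sense: continuity of $\Phi_{i}$ follows from the $\mathbb{D}$-continuity of $\Phi$; the fact that $\Phi_{i}$ is (weakly) monotone increasing follows by reading off the $\mathbf{e}_{i}$-component of the defining inequality of a $\mathbb{D}$-monotone increasing function, using that $\xi\preceq_{_{\mathbb{D}}}\tau$ is equivalent to the two real inequalities on the components; and $\Phi_{i}(\lambda_{i})=\alpha_{i}$, $\Phi_{i}(\mu_{i})=\beta_{i}$ come from $\Phi(\lambda)=\alpha$, $\Phi(\mu)=\beta$, so by the intermediate value theorem $\Phi_{i}$ maps $[\lambda_{i},\mu_{i}]$ onto $[\alpha_{i},\beta_{i}]$. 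Then, for $i=1,2$, the classical invariance of the line integral of a continuous function along a rectifiable path under a continuous increasing change of parameter (\cite{con}, Chapter IV) yields $\int_{\gamma_{i}}f_{i}=\int_{\gamma_{i}\circ\Phi_{i}}f_{i}$. Substituting these two equalities into the two displayed decompositions gives $\int_{\Gamma}f=\int_{\Gamma\circ\Phi}f$.

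The one genuinely substantive point is the middle step: extracting from the hyperbolic hypotheses — partial-order monotonicity, $\mathbb{D}$-continuity, and the nonzero-divisor condition $\Phi(z)\notin\mathbb{O}$ that guarantees rectifiability of $\Gamma\circ\Phi$ — exactly the real-variable properties of each $\Phi_{i}$ needed to invoke the classical theorem, together with the observation that $t\mapsto\Gamma(\Phi(t))$ really does split componentwise into $\gamma_{i}\circ\Phi_{i}$. Once that is in hand, everything else is routine bookkeeping with the idempotent representation, exactly as in the proofs of Proposition \ref{P2} and Remark \ref{R4}.
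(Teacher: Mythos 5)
Your proposal is correct and takes essentially the same route as the paper: decompose $\Gamma$, $\Phi$, and $f$ into idempotent components, check that each $\Phi_i$ is a continuous increasing reparametrization of $[\alpha_i,\beta_i]$ with the right endpoints, invoke the classical change-of-parameter result (Proposition 1.13 of \cite{con}, Chapter IV) componentwise, and reassemble via Remark \ref{R4}. The only difference is that you spell out the componentwise verifications (monotonicity from the partial order, continuity, surjectivity) that the paper simply asserts.
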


\begin{proof}
Let $\lambda =\lambda _{1}\mathbf{e}_{1}+\lambda _{2}\mathbf{e}_{2},\mu =\mu
_{1}\mathbf{e}_{1}+\mu _{2}\mathbf{e}_{2},\alpha =\alpha _{1}\mathbf{e}%
_{1}+\alpha _{2}\mathbf{e}_{2},\beta =\beta _{1}\mathbf{e}_{1}+\beta _{2}%
\mathbf{e}_{2}.$

Since $\Gamma (=\gamma _{1}\mathbf{e}_{1}+\gamma _{2}\mathbf{e}_{2}):\left[
\alpha ,\beta \right] _{\mathbb{D}}\rightarrow \mathbb{BC}$ is a rectifiable 
$\mathbb{D-}$path and $\Phi =(\Phi _{1}\mathbf{e}_{1}+\Phi _{2}\mathbf{e}%
_{2}):\left[ \lambda ,\mu \right] _{\mathbb{D}}\rightarrow \left[ \alpha
,\beta \right] _{\mathbb{D}}$ is a $\mathbb{D-}$monotone increasing
product-type function with $\Phi (\lambda )=\alpha ,$ $\Phi (\mu )=\beta ,$
then for $i=1,2$ $\gamma _{i}:[\alpha _{i},\beta _{i}]\rightarrow 
%TCIMACRO{\U{2102} }%
%BeginExpansion
\mathbb{C}
%EndExpansion
$ are rectifiable path and $\Phi _{i}:[\lambda _{i},\mu _{i}]\rightarrow
\lbrack \alpha _{i},\beta _{i}]$ are continuous increasing functions with $%
\Phi _{i}(\lambda _{i})=\alpha _{i}$ and $\Phi _{i}(\mu _{i})=\beta _{i}.$

Also since $f(=f_{1}\mathbf{e}_{1}+f_{2}\mathbf{e}_{2})$ is $\mathbb{D-}$%
continuous on $\{\Gamma \},$ then by Remark \ref{R3} we have $f_{i}$ are
continuous on $\{\gamma _{i}\}$ for $i=1,2.$

Then by Proposition $1.13$ ($\cite{con},$ Chapter $IV$), we have%
\begin{equation*}
\int\limits_{\gamma _{i}}f=\int\limits_{\gamma _{i}\circ \Phi _{i}}f,\text{
for }i=1,2.
\end{equation*}

Then by Remark \ref{R4}, we have%
\begin{eqnarray*}
\int\limits_{\Gamma }f &=&\left( \int\limits_{\gamma _{1}}f_{1}\right) 
\mathbf{e}_{1}+\left( \int\limits_{\gamma _{2}}f_{2}\right) \mathbf{e}_{2} \\
&=&\left( \int\limits_{\gamma _{1}\circ \Phi _{1}}f_{1}\right) \mathbf{e}%
_{1}+\left( \int\limits_{\gamma _{2}\circ \Phi _{2}}f_{2}\right) \mathbf{e}%
_{2} \\
&=&\int\limits_{\Gamma \circ \Phi }f.
\end{eqnarray*}
\end{proof}

Let $\Gamma (=\gamma _{1}\mathbf{e}_{1}+\gamma _{2}\mathbf{e}_{2}):\left[
\alpha ,\beta \right] _{\mathbb{D}}\rightarrow \mathbb{BC}$ is a rectifiable 
$\mathbb{D-}$path and for $\alpha \preceq _{_{\mathbb{D}}}\tau \preceq _{_{%
\mathbb{D}}}\beta ,$ let $(\Gamma )_{\tau }$ be $V(\Gamma ;\left[ \alpha
,\tau \right] _{\mathbb{D}}).$ That is%
\begin{equation}
(\Gamma )_{\tau }=\sup\nolimits_{\mathbb{D}}\left\{
\sum\limits_{k=1}^{n}\left\vert \Gamma (\tau _{k})-\Gamma (\tau
_{k-1})\right\vert _{\mathbb{D}}:\{\tau _{0},\tau _{1},...,\tau _{n}\}\text{
is a partition of }\left[ \alpha ,\tau \right] _{\mathbb{D}}\right\} .
\label{3}
\end{equation}

Let $\alpha =\alpha _{1}\mathbf{e}_{1}+\alpha _{2}\mathbf{e}_{2},\beta
=\beta _{1}\mathbf{e}_{1}+\beta _{2}\mathbf{e}_{2},\tau =t\mathbf{e}_{1}+s%
\mathbf{e}_{2}$ and $\tau _{k}=t_{k}\mathbf{e}_{1}+s_{k}\mathbf{e}_{2}$ for $%
k=0,1,...n.$

Since $\Gamma $ is a rectifiable $\mathbb{D-}$path, for $i=1,2$ $\gamma
_{i}:[\alpha _{i},\beta _{i}]\rightarrow 
%TCIMACRO{\U{2102} }%
%BeginExpansion
\mathbb{C}
%EndExpansion
$ are rectifiable path.

Let $\alpha _{1}\leq t\leq \beta _{1}$, $\alpha _{2}\leq s\leq \beta _{2}$
and also%
\begin{equation*}
(\gamma _{1})_{t}=\sup \left\{ \sum\limits_{k=1}^{n}\left\vert \gamma
_{1}(t_{k})-\gamma _{1}(t_{k-1})\right\vert :\{t_{0},t_{1},...,t_{n}\}\text{
is a partition of }\left[ \alpha _{1},t\right] \right\} ,
\end{equation*}%
\begin{equation*}
(\gamma _{2})_{s}=\sup \left\{ \sum\limits_{k=1}^{n}\left\vert \gamma
_{2}(s_{k})-\gamma _{2}(s_{k-1})\right\vert :\{s_{0},s_{1},...,s_{n}\}\text{
is a partition of }\left[ \alpha _{1},s\right] \right\} .
\end{equation*}

Then from (\ref{3}) we have%
\begin{equation*}
(\Gamma )_{\tau }=(\gamma _{1})_{t}\mathbf{e}_{1}+(\gamma _{2})_{s}\mathbf{e}%
_{2}.
\end{equation*}

Since $(\gamma _{1})_{t}$ and $(\gamma _{2})_{s}$ are increasing, $(\gamma
_{1})_{t}:[\alpha _{i},\beta _{i}]\rightarrow 
%TCIMACRO{\U{211d} }%
%BeginExpansion
\mathbb{R}
%EndExpansion
$ and $(\gamma _{2})_{s}:[\alpha _{i},\beta _{i}]\rightarrow 
%TCIMACRO{\U{211d} }%
%BeginExpansion
\mathbb{R}
%EndExpansion
$ are bounded variation. So, by Proposition \ref{P2}, $(\Gamma )_{\tau }:%
\left[ \alpha ,\beta \right] _{\mathbb{D}}\rightarrow \mathbb{D}$ is of $%
\mathbb{D}-$bounded variation.

If $f(=f_{1}\mathbf{e}_{1}+f_{2}\mathbf{e}_{2})$ is product-type $\mathbb{D-}
$continuous function on $\{\Gamma \}$ define%
\begin{equation}
\int\limits_{\Gamma }f\left\vert dz\right\vert _{\mathbb{D}}=\int\limits_{%
\left[ \alpha ,\beta \right] _{\mathbb{D}}}f(\Gamma (\tau ))d(\Gamma )_{\tau
}.  \label{4}
\end{equation}

Clearly $f_{1}$ is continuous on $\{\gamma _{1}\}$ and $f_{2}$ is continuous
on $\{\gamma _{2}\}$. If we define%
\begin{equation*}
\int\limits_{\gamma _{1}}f_{1}\left\vert dz_{1}\right\vert
=\int\limits_{\alpha _{1}}^{\beta _{1}}f_{1}(\gamma _{1}(t))d(\gamma
_{1})_{t},
\end{equation*}%
\begin{equation*}
\int\limits_{\gamma _{2}}f_{2}\left\vert dz_{2}\right\vert
=\int\limits_{\alpha _{2}}^{\beta _{2}}f_{2}(\gamma _{2}(s))d(\gamma
_{2})_{s},
\end{equation*}

then for $dz=dz_{1}\mathbf{e}_{1}+dz_{2}\mathbf{e}_{2}$, from (\ref{4}) we
have%
\begin{equation}
\int\limits_{\Gamma }f\left\vert dz\right\vert _{\mathbb{D}}=\left(
\int\limits_{\gamma _{1}}f_{1}\left\vert dz_{1}\right\vert \right) \mathbf{e}%
_{1}+\left( \int\limits_{\gamma _{2}}f_{2}\left\vert dz_{2}\right\vert
\right) \mathbf{e}_{2}.  \label{5}
\end{equation}

If $\Gamma $ is rectifiable $\mathbb{D}-$curve in $\mathbb{BC}$ then denote
by $-\Gamma $ the $\mathbb{D}-$curve defined by $(-\Gamma )(\tau )=\Gamma
(-\tau )$ for $-\beta \preceq _{\mathbb{D}}\tau \preceq _{\mathbb{D}}-\alpha
.$ Also if $c\in \mathbb{BC}$ let $\Gamma +c$ denote the curve defined by $%
(\Gamma +c)(\tau )=$ $\Gamma (\tau )+c$ for $\tau \in \left[ \alpha ,\beta %
\right] _{\mathbb{D}}.$

\begin{proposition}
Let $\Gamma (=\gamma _{1}\mathbf{e}_{1}+\gamma _{2}\mathbf{e}_{2}):\left[
\alpha ,\beta \right] _{\mathbb{D}}\rightarrow \mathbb{BC}$ is a rectifiable 
$\mathbb{D-}$path and suppose that $f(=f_{1}\mathbf{e}_{1}+f_{2}\mathbf{e}%
_{2})$ is a product-type $\mathbb{D-}$continuous function on $\{\Gamma \}.$
Then

$a)$ $\int\limits_{\Gamma }f=-\int\limits_{-\Gamma }f;$

$b)$ $\left\vert \int\limits_{\Gamma }f\right\vert _{\mathbb{D}}\preceq _{_{%
\mathbb{D}}}\int\limits_{\Gamma }\left\vert f\right\vert _{\mathbb{D}%
}\left\vert dz\right\vert _{\mathbb{D}}\preceq _{_{\mathbb{D}}}V(\Gamma
)\sup_{\mathbb{D}}[\left\vert f(z)\right\vert _{\mathbb{D}}:z\in \{\Gamma
\}];$

$c)$ If $c\in \mathbb{BC}$ then $\int\limits_{\Gamma
}f(z)dz=\int\limits_{\Gamma +c}f(z-c)dz.$
\end{proposition}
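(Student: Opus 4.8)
The plan is to reduce each of the three assertions to its two complex idempotent components and then quote the corresponding one-variable facts from $(\cite{con},$ Chapter $IV)$. Throughout I would write $\alpha =\alpha _{1}\mathbf{e}_{1}+\alpha _{2}\mathbf{e}_{2}$, $\beta =\beta _{1}\mathbf{e}_{1}+\beta _{2}\mathbf{e}_{2}$, and $c=c_{1}\mathbf{e}_{1}+c_{2}\mathbf{e}_{2}$ for $c\in \mathbb{BC}$. Since $\Gamma $ is a rectifiable $\mathbb{D-}$path, Proposition \ref{P2} shows $\gamma _{1},\gamma _{2}$ are rectifiable paths in $\mathbb{C}$; and since $f(=f_{1}\mathbf{e}_{1}+f_{2}\mathbf{e}_{2})$ is product-type $\mathbb{D-}$continuous on $\{\Gamma \}=\{\gamma _{1}\}\mathbf{e}_{1}+\{\gamma _{2}\}\mathbf{e}_{2}$ (Remark \ref{R3}), each $f_{i}$ is continuous on $\{\gamma _{i}\}$. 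Hence all the one-variable line integrals appearing below are defined, and by Remark \ref{R4} and $(\ref{5})$ their $\mathbf{e}$-combinations recover the corresponding $\mathbb{D-}$objects.

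For $(a)$ I would note that $-\Gamma =(-\gamma _{1})\mathbf{e}_{1}+(-\gamma _{2})\mathbf{e}_{2}$, so $\{-\Gamma \}=\{\Gamma \}$ and $\int_{-\Gamma }f$ is well defined; then apply the complex identity $\int_{\gamma _{i}}f_{i}=-\int_{-\gamma _{i}}f_{i}$ for $i=1,2$ and recombine by Remark \ref{R4}. For $(c)$ I would observe that $\Gamma +c=(\gamma _{1}+c_{1})\mathbf{e}_{1}+(\gamma _{2}+c_{2})\mathbf{e}_{2}$ and that $z\mapsto f(z-c)$ is product-type with components $z_{i}\mapsto f_{i}(z_{i}-c_{i})$, continuous on $\{\gamma _{i}+c_{i}\}$; the one-variable translation identity $\int_{\gamma _{i}}f_{i}(z)\,dz=\int_{\gamma _{i}+c_{i}}f_{i}(z-c_{i})\,dz$ for $i=1,2$, recombined by Remark \ref{R4}, then gives $(c)$.

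Part $(b)$ is where the real work sits, because two order operations appear at once. I would start from the standard one-variable estimate in $(\cite{con},$ Chapter $IV)$: for $i=1,2$,
\[
\left|\int\limits_{\gamma _{i}}f_{i}\right|\leq \int\limits_{\gamma _{i}}|f_{i}|\,|dz_{i}|\leq V(\gamma _{i})\sup \{|f_{i}(z)|:z\in \{\gamma _{i}\}\}.
\]
Next I would identify $|f|_{\mathbb{D}}$ as the product-type $\mathbb{D-}$continuous function with components $|f_{i}|$, so that $(\ref{5})$ writes $\int_{\Gamma }|f|_{\mathbb{D}}|dz|_{\mathbb{D}}$ as the $\mathbf{e}$-combination of the two middle terms above; use Proposition \ref{P3} for $V(\Gamma )=V(\gamma _{1})\mathbf{e}_{1}+V(\gamma _{2})\mathbf{e}_{2}$; and use Remark \ref{R3} together with Definition \ref{D3} for
\[
\sup\nolimits_{\mathbb{D}}[|f(z)|_{\mathbb{D}}:z\in \{\Gamma \}]=\sup \{|f_{1}(z_{1})|:z_{1}\in \{\gamma _{1}\}\}\mathbf{e}_{1}+\sup \{|f_{2}(z_{2})|:z_{2}\in \{\gamma _{2}\}\}\mathbf{e}_{2}.
\]
Using $\mathbf{e}_{i}^{2}=\mathbf{e}_{i}$ and $\mathbf{e}_{1}\mathbf{e}_{2}=0$, the product $V(\Gamma )\sup_{\mathbb{D}}[|f(z)|_{\mathbb{D}}:z\in \{\Gamma \}]$ is exactly the $\mathbf{e}$-combination of $V(\gamma _{i})\sup \{|f_{i}(z)|:z\in \{\gamma _{i}\}\}$. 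Finally, since $M_{1}\mathbf{e}_{1}+M_{2}\mathbf{e}_{2}\preceq _{_{\mathbb{D}}}N_{1}\mathbf{e}_{1}+N_{2}\mathbf{e}_{2}$ precisely when $M_{1}\leq N_{1}$ and $M_{2}\leq N_{2}$, the two real chains assemble into $\left|\int_{\Gamma }f\right|_{\mathbb{D}}\preceq _{_{\mathbb{D}}}\int_{\Gamma }|f|_{\mathbb{D}}|dz|_{\mathbb{D}}\preceq _{_{\mathbb{D}}}V(\Gamma )\sup_{\mathbb{D}}[|f(z)|_{\mathbb{D}}:z\in \{\Gamma \}]$. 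I expect the only delicate point to be this coordinatewise matching of products and of the order relation in $(b)$; $(a)$ and $(c)$ follow immediately once the componentwise reduction is in place.
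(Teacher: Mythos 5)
Your proposal is correct and follows essentially the same route as the paper: reduce everything to the idempotent components, invoke Proposition 1.17 of (\cite{con}, Chapter IV) for each complex piece, and reassemble via Remark \ref{R4}, equation (\ref{5}) and Definition \ref{D3} — indeed you spell out the reassembly in part $(b)$ more explicitly than the paper, which only gestures at it. One small slip: the decomposition $V(\Gamma )=V(\gamma _{1})\mathbf{e}_{1}+V(\gamma _{2})\mathbf{e}_{2}$ is Proposition \ref{P2}, not Proposition \ref{P3} (the latter concerns piecewise $\mathbb{D}-$smooth paths).
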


\begin{proof}
Since $\Gamma $ is rectifiable $\mathbb{D-}$path, for $i=1,2$ $\gamma _{i}$
are rectifiable curve in $%
%TCIMACRO{\U{2102} }%
%BeginExpansion
\mathbb{C}
%EndExpansion
$ and $f_{i}$ are continuous on $\{\gamma _{i}\}.$

Then by Proposition $1.17$ ($\cite{con},$ Chapter $IV$) we have for $i=1,2$

$i)$ $\int\limits_{\gamma _{i}}f_{i}=-\int\limits_{-\gamma _{i}}f_{i};$

$ii)$ $\left\vert \int\limits_{\gamma _{i}}f_{i}\right\vert \leq
\int\limits_{\gamma _{i}}\left\vert f_{i}\right\vert \left\vert
dz_{i}\right\vert \leq V(\gamma _{i})\sup [\left\vert
f_{i}(z_{i})\right\vert :z_{i}\in \{\gamma _{i}\}];$

$iii)$ If $c_{i}\in \mathbb{C}$ then $\int\limits_{\gamma
_{i}}f_{i}(z_{i})dz_{i}=\int\limits_{\gamma
_{i}+c_{i}}f_{i}(z_{i}-c_{i})dz_{i}.$

Let $dz=dz_{1}\mathbf{e}_{1}+dz_{2}\mathbf{e}_{2}$ and $c=c_{1}\mathbf{e}%
_{1}+c_{2}\mathbf{e}_{2}.$ Then using Remark \ref{R4}, equation \ref{5},
Definition \ref{D3}, Proposition \ref{P2} and properties of $\mathbb{D-}$%
modulus we have the required results.
\end{proof}

It is easy to verify that $(\mathbb{BC}$,$d_{\mathbb{D}})$ is a Hyperbolic
Valued Metric Space \cite{Hyp}, where $d_{\mathbb{D}}(x,y)=d_{1}(x_{1},y_{1})%
\mathbf{e}_{1}+d_{2}(x_{2},y_{2})\mathbf{e}_{2}$ for $x=x_{1}\mathbf{e}%
_{1}+x_{2}\mathbf{e}_{2},y=y_{1}\mathbf{e}_{1}+y_{2}\mathbf{e}_{2}\in 
\mathbb{BC}$ and $d_{1},d_{2}$ are usual metric in $%
%TCIMACRO{\U{2102} }%
%BeginExpansion
\mathbb{C}
%EndExpansion
.$ Let $G=G_{1}\mathbf{e}_{1}+G_{2}\mathbf{e}_{2}$ be product-type open set
in $(\mathbb{BC}$,$d_{\mathbb{D}}),$ then $G_{1}$ and $G_{2}$ are open sets
in complex metric space.

\begin{definition}
A product-type function $F$ is called product-type primitive of a
product-type $\mathbb{D-}$continuous function $f$ on a product-type open set 
$G$ if $F^{\prime }(x)=f(x)$ for all $x\in G.$
\end{definition}

\begin{remark}
\label{R7} In the above definition if we take $F=F_{1}\mathbf{e}_{1}+F_{2}%
\mathbf{e}_{2},f=f_{1}\mathbf{e}_{1}+f_{2}\mathbf{e}_{2}$ and $G=G_{1}%
\mathbf{e}_{1}+G_{2}\mathbf{e}_{2},$ then $F_{1},F_{2}$ are primitives of $%
f_{1},f_{2}$ on $G_{1},G_{2}$ respectively.
\end{remark}

The next theorem is the bicomplex analogue of the Fundamental Theorem of
Calculus for line integrals.

\begin{theorem}
\label{T3} Let $G$ be product-type open set in the hyperbolic metric space $(%
\mathbb{BC}$,$d_{\mathbb{D}})$ and let $\Gamma $ be a rectifiable $\mathbb{D-%
}$path in $G$ with initial and end points $\alpha $ and $\beta $
respectively. If $f:G\rightarrow \mathbb{BC}$ is a product-type $\mathbb{D-}$%
continuous function with a product-tye primitive $F:G\rightarrow \mathbb{BC}$%
, then%
\begin{equation*}
\int\limits_{\Gamma }f=F(\beta )-F(\alpha ).
\end{equation*}
\end{theorem}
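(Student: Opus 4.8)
The strategy is the same idempotent-decomposition reduction used throughout the paper: split every object into its $\mathbf{e}_{1}$- and $\mathbf{e}_{2}$-components, invoke the classical Fundamental Theorem of Calculus for line integrals (\cite{con}, Chapter $IV$) in each component, and recombine. Write $f=f_{1}\mathbf{e}_{1}+f_{2}\mathbf{e}_{2}$, $F=F_{1}\mathbf{e}_{1}+F_{2}\mathbf{e}_{2}$, $G=G_{1}\mathbf{e}_{1}+G_{2}\mathbf{e}_{2}$, $\Gamma=\gamma_{1}\mathbf{e}_{1}+\gamma_{2}\mathbf{e}_{2}$, and let the parameter $\mathbb{D}$-interval have endpoints $\lambda=\lambda_{1}\mathbf{e}_{1}+\lambda_{2}\mathbf{e}_{2}$ and $\mu=\mu_{1}\mathbf{e}_{1}+\mu_{2}\mathbf{e}_{2}$, so that $\Gamma(\lambda)=\alpha=\alpha_{1}\mathbf{e}_{1}+\alpha_{2}\mathbf{e}_{2}$ and $\Gamma(\mu)=\beta=\beta_{1}\mathbf{e}_{1}+\beta_{2}\mathbf{e}_{2}$. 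As recorded just before the definition of primitive, $G_{1}$ and $G_{2}$ are open in $\mathbb{C}$; since $f$ is product-type and $\mathbb{D}-$continuous, $f_{1}$ and $f_{2}$ are continuous on $G_{1}$, $G_{2}$; and by Remark \ref{R7}, $F_{1}$ and $F_{2}$ are primitives of $f_{1}$, $f_{2}$ on $G_{1}$, $G_{2}$.

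Next I would verify the geometric hypotheses component-wise. Since $\Gamma$ is a rectifiable $\mathbb{D}-$path it is of $\mathbb{D}-$bounded variation, so Proposition \ref{P2} gives that $\gamma_{1}$ and $\gamma_{2}$ are paths of bounded variation, i.e. rectifiable paths; by Remark \ref{R3} their traces $\{\gamma_{1}\}$ and $\{\gamma_{2}\}$ lie in $G_{1}$ and $G_{2}$ respectively. Using the defining identity $\Gamma(t\mathbf{e}_{1}+s\mathbf{e}_{2})=\gamma_{1}(t)\mathbf{e}_{1}+\gamma_{2}(s)\mathbf{e}_{2}$ together with $\Gamma(\lambda)=\alpha$ and $\Gamma(\mu)=\beta$, one reads off $\gamma_{i}(\lambda_{i})=\alpha_{i}$ and $\gamma_{i}(\mu_{i})=\beta_{i}$, so $\gamma_{i}$ has initial point $\alpha_{i}$ and end point $\beta_{i}$ for $i=1,2$.

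With all the hypotheses of the scalar statement now met for each index, applying it yields $\int_{\gamma_{i}}f_{i}=F_{i}(\beta_{i})-F_{i}(\alpha_{i})$ for $i=1,2$. Finally I would reassemble via Remark \ref{R4}:
\begin{equation*}
\int_{\Gamma}f=\Bigl(\int_{\gamma_{1}}f_{1}\Bigr)\mathbf{e}_{1}+\Bigl(\int_{\gamma_{2}}f_{2}\Bigr)\mathbf{e}_{2}=\bigl(F_{1}(\beta_{1})-F_{1}(\alpha_{1})\bigr)\mathbf{e}_{1}+\bigl(F_{2}(\beta_{2})-F_{2}(\alpha_{2})\bigr)\mathbf{e}_{2},
\end{equation*}
and then rewrite the right-hand side as $\bigl(F_{1}(\beta_{1})\mathbf{e}_{1}+F_{2}(\beta_{2})\mathbf{e}_{2}\bigr)-\bigl(F_{1}(\alpha_{1})\mathbf{e}_{1}+F_{2}(\alpha_{2})\mathbf{e}_{2}\bigr)=F(\beta)-F(\alpha)$, using $F=F_{1}\mathbf{e}_{1}+F_{2}\mathbf{e}_{2}$, which is exactly the assertion.

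I do not anticipate a substantive obstacle: once the dictionary between $\mathbb{BC}$-objects and pairs of $\mathbb{C}$-objects is in place, the argument is pure bookkeeping. The only point demanding care is the mild notational overload of ``$\alpha$, $\beta$'' — here they denote the endpoints of the trace, $\Gamma(\lambda)$ and $\Gamma(\mu)$, not the endpoints of the parameter $\mathbb{D}$-interval — together with the accompanying check that each component path $\gamma_{i}$ carries precisely the boundary values $\alpha_{i}$, $\beta_{i}$ needed to invoke the complex theorem with the correct values of $F_{i}$ at its endpoints.
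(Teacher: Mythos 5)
Your proposal is correct and follows essentially the same route as the paper's own proof: idempotent decomposition of $G$, $F$, $f$, $\Gamma$, an appeal to Remark \ref{R7} and the classical Theorem $1.18$ of \cite{con} in each component, and reassembly via Remark \ref{R4}. The extra care you take with the endpoint bookkeeping is a welcome clarification but does not change the argument.
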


\begin{proof}
Let $G=G_{1}\mathbf{e}_{1}+G_{2}\mathbf{e}_{2},$ $F=F_{1}\mathbf{e}_{1}+F_{2}%
\mathbf{e}_{2},$ $f=f_{1}\mathbf{e}_{1}+f_{2}\mathbf{e}_{2},$ $\Gamma
=\gamma _{1}\mathbf{e}_{1}+\gamma _{2}\mathbf{e}_{2},$ $\alpha =\alpha _{1}%
\mathbf{e}_{1}+\alpha _{2}\mathbf{e}_{2},$ and $\beta =\beta _{1}\mathbf{e}%
_{1}+\beta _{2}\mathbf{e}_{2}.$

Then for $i=1,2$ $G_{i}$ are open sets in $%
%TCIMACRO{\U{2102} }%
%BeginExpansion
\mathbb{C}
%EndExpansion
$ and $\gamma _{i}$ are rectifiable path in $G_{i}$ with initial and end
points $\alpha _{i}$ and $\beta _{i}$ respectively.

Therefore by Remark \ref{R7} and by Theorem $1.18$ ($\cite{con},$ Chapter $%
IV $)~we have%
\begin{equation}
\int\limits_{\gamma _{i}}f_{i}=F_{i}(\beta _{i})-F_{i}(\alpha _{i})\text{ \
for }i=1,2.  \label{6}
\end{equation}

Then by Remark \ref{R4} we have%
\begin{eqnarray*}
\int\limits_{\Gamma }f &=&\left( \int\limits_{\gamma _{1}}f_{1}\right) 
\mathbf{e}_{1}+\left( \int\limits_{\gamma _{2}}f_{2}\right) \mathbf{e}_{2} \\
&=&(F_{1}(\beta _{1})-F_{1}(\alpha _{1}))\mathbf{e}_{1}+(F_{2}(\beta
_{2})-F_{2}(\alpha _{2}))\mathbf{e}_{2},\text{ by (\ref{6})} \\
&=&F(\beta )-F(\alpha ).
\end{eqnarray*}
\end{proof}

\begin{corollary}
Let $G,$ $\Gamma $ and $f$ satisfy the same hypothesis as in Theorem \ref{T3}%
. If $\Gamma $ is closed curve then%
\begin{equation*}
\int\limits_{\Gamma }f=0.
\end{equation*}
\end{corollary}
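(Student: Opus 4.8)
The plan is to deduce the corollary directly from Theorem~\ref{T3} together with the definition of a closed curve. Recall that a $\mathbb{D-}$path $\Gamma :\left[ \alpha ,\beta \right] _{\mathbb{D}}\rightarrow \mathbb{BC}$ is called closed when its initial and end points coincide, i.e. $\Gamma (\alpha )=\Gamma (\beta )$. In the notation of Theorem~\ref{T3} this means precisely that the point designated $\alpha$ (the initial point of $\Gamma$) equals the point designated $\beta$ (the end point of $\Gamma$).

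First I would invoke Theorem~\ref{T3}: since $G$ is a product-type open set in $(\mathbb{BC},d_{\mathbb{D}})$, $\Gamma$ is a rectifiable $\mathbb{D-}$path in $G$, and $f:G\rightarrow \mathbb{BC}$ is a product-type $\mathbb{D-}$continuous function admitting a product-type primitive $F:G\rightarrow \mathbb{BC}$, we have
\begin{equation*}
\int\limits_{\Gamma }f=F(\beta )-F(\alpha ),
\end{equation*}
where $\alpha$ and $\beta$ are the initial and end points of $\Gamma$.

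Then, because $\Gamma$ is closed, its initial and end points agree, so $\alpha = \beta$. Substituting into the formula above gives $\int\limits_{\Gamma }f=F(\beta )-F(\alpha )=F(\alpha )-F(\alpha )=0$, which is the claimed identity. I do not anticipate any genuine obstacle here; the only point requiring a word of care is to make explicit that "closed curve" is being used in the sense that the endpoints of the parametrizing $\mathbb{D-}$interval are mapped to the same bicomplex value, which is exactly the hypothesis under which the telescoping difference $F(\beta)-F(\alpha)$ collapses. One could also phrase the argument componentwise via $\mathbf{e}_1,\mathbf{e}_2$ using Remark~\ref{R4} and the classical corollary to Theorem~$1.18$ of \cite{con}, but the direct route through Theorem~\ref{T3} is cleaner.
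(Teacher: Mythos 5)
Your argument is correct, but it takes a different (and arguably cleaner) route than the paper. You deduce the corollary directly from the paper's own Theorem~\ref{T3}: since $\Gamma$ is closed, its initial and end points coincide, so the right-hand side $F(\beta)-F(\alpha)$ of the bicomplex Fundamental Theorem vanishes. The paper instead follows its standard pattern of splitting everything into idempotent components, $\Gamma=\gamma_1\mathbf{e}_1+\gamma_2\mathbf{e}_2$, $f=f_1\mathbf{e}_1+f_2\mathbf{e}_2$, applying the classical Corollary $1.22$ of \cite{con} (Chapter IV) to each $\gamma_i$, and reassembling via Remark~\ref{R4}. Your route has the advantage of actually using the bicomplex theorem the paper just proved, and it sidesteps a small point the paper leaves implicit, namely that $\Gamma$ being closed forces each component $\gamma_i$ to be closed (which is what one needs to invoke the classical corollary componentwise). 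You were also right to flag the notational overloading: in Theorem~\ref{T3} the symbols $\alpha,\beta$ denote the initial and end points of the curve in $\mathbb{BC}$, not the endpoints of the parametrizing $\mathbb{D}$-interval, and making that explicit is exactly what justifies the collapse $F(\beta)-F(\alpha)=0$. Both proofs are valid; yours is the more direct one.
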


\begin{proof}
Let $G=G_{1}\mathbf{e}_{1}+G_{2}\mathbf{e}_{2},$ $f=f_{1}\mathbf{e}_{1}+f_{2}%
\mathbf{e}_{2}$ and $\Gamma =\gamma _{1}\mathbf{e}_{1}+\gamma _{2}\mathbf{e}%
_{2}.$

Then by Corollary $1.22$ ($\cite{con},$ Chapter $IV$) and using Remark \ref%
{R4} we have%
\begin{equation*}
\int\limits_{\Gamma }f=0.
\end{equation*}
\end{proof}


\begin{thebibliography}{99}
\bibitem{Alp} Alpay, D., Luna-Elizarrar\'{a}s, M.E., Shapiro, M., Struppa,
D.C.: Basics of Functional Analysis with Bicomplex Scalars, and Bicomplex
Schur Analysis. Springer International Publishing (2014).

\bibitem{Bal} Balankin, A.S., Bory-Reyes, J., Luna-Elizarrar\'{a}s, M.E.,
Shapiro, M.: Cantor-type sets in hyperbolic numbers. Fractals 24(04) (2016).

\bibitem{Bo} Bory-Reyes, J., P%
%TCIMACRO{\U{b4}}%
%BeginExpansion
\'{}%
%EndExpansion
erez-Regalado, C.O., Shapiro, M.: Cauchy type integral in bicomplex setting
and its properties. Complex Anal. Oper. Theory 13(6) (2019) 2541--2573.

\bibitem{Co} Cockle, J.: On certain functions resembling quaternions and on
a new imaginary in algebra. Lond-Dublin-Edinb. Philos. Mag. 3(33) (1848)
435--439.

\bibitem{con} Conway, J.B.: Functions of One Complex Variable I, Second
Edition. Graduate Texts in Mathematics, 11, Springer-Verlag, New
York-Berlin, (1978).

\bibitem{Da} Davenport, C.M.: In: Ablamowicz, R., Parra, J.M., Josep, M.,
Lounesto, P. (eds), A Commutative Hypercomplex Algebra with Associated
Function Theory Clifford Algebras with Numeric and Symbolic Computations,
pp. 213-227. Birkh\"{a}user, Boston (1996).

\bibitem{Hyp} Ghosh, C., Bandyopadhyay, A., Mondal, S.: Hyperbolic Valued
Metric Space. ArXiv [math.CV] (2021) arXiv:2108.07100.

\bibitem{Gh} Ghosh, C., Biswas, S., and Yasin, T.: Hyperbolic valued signed
measures. Int. J. Math. Trends Technol. 55(7) (2018) 515--522.

\bibitem{Ham} Hamilton, W.R.: On a new species of imaginary quantities
connected with a theory of quaternions, Proc. R. Ir. Acad. 2 (1844) 424-434.

\bibitem{Lun1} Luna-Elizarrar\'{a}s, M.E.: Integration of Functions of a
Hyperbolic Variable, Complex Anal. Oper. Theory 16, 35 (2022).
https://doi.org/10.1007/s11785-022-01197-9

\bibitem{Lun} Luna-Elizarraras, M.E., Shapiro, M., Struppa, D.C., Vajiac,
A.: Bicomplex holomorphic functions: The algebra, geometry and analysis of
bicomplex numbers, Frontiers in Mathematics, Birkh\"{a}user Basel, 2015.

\bibitem{Pr} Price, G.B.: An Introduction to Multicomplex Spaces and
Functions, Marcel Dekker Inc., \ New York, 1991.

\bibitem{Sai} Saini, H., Sharma, A., Kumar, R.: Some Fundamental Theorems of
Functional Analysis with Bicomplex and Hyperbolic Scalars, Adv. Appl.
Clifford Algebras, 30(66), 01-23 (2020).

\bibitem{Seg} Segre, C.: Le rappresentazioni reali delle forme complessee
Gli Enti Iperalgebrici,Math. Ann. 40 (1892) 413--467.

\bibitem{Sob} Sobczyk, G.: The hyperbolic number plane. Coll. Maths. Jour.
26 (4) (1995) 268-280.

\bibitem{Tel} Tellez-Sanchez, G.Y., Rayes, J.B.: Hyperbolic Functions of
Bounded Variations and Riemann-Stieltjes Integral involving strong
partitions of Hyperbolic Intervals, arXiv:2111.14019v1[math.CV], 1-12.
\end{thebibliography}
\end{document}